\documentclass[12pt,leqno]{article}
\usepackage[shortlabels]{enumitem}
\usepackage[utf8]{inputenc} 
\usepackage[T1]{fontenc}
\usepackage{preamble}
\usepackage{tikz-cd}

\title{RC-positivity of complex surfaces \thanks{Mathematics Subject Classification: 32L05, 32J15, 14E08. \newline Keywords: RC-positivity, tensor power of vector bundles, four dimensional manifolds.}
}
\author{You-Cheng Chou and Kuang-Ru Wu}

\usepackage{fullpage}
\usepackage{multicol}

\begin{document}
\date{}

\parskip=6pt

\maketitle

\begin{abstract}
Given a compact complex surface with a nonflat Ricci-flat K\"ahler metric, we show that its holomorphic tangent bundle admits an RC-positive Hermitian metric. The proof relies on a characterization of RC-positivity through the anti-self-dual part of the Weyl operator, the Weitzenb\"ock formula on the four dimensional Einstein manifold, and a conformal perturbation on the  Ricci-flat metric. 

As a consequence, we prove that RC-positivity is not preserved after taking tensor, exterior, or symmetric power. Moreover, we show that RC-positivity is a strictly weaker notion than uniform RC-positivity, and that RC-positivity of holomorphic tangent bundle does not necessarily imply rational connectedness of the base manifold. 
\end{abstract}

\section{Introduction}

The notion of RC-positivity, introduced by Yang \cite{YangCamb}, has proven to be important and effective in the study of complex geometry. One prominent case is Yang's proof on a conjecture of Yau on projectivity and rational connectedness of a compact K\"ahler manifold with positive holomorphic sectional curvature (for developments and applications of RC-positivity, see \cite{YangCompos, YangForum, YangJussieu, Yangjdg2024, xiong2024rc, wu2025uniform, wu2026uniform, QSZ}).

We call a holomorphic vector bundle RC-positive if it admits a Hermitian metric that is RC-positive (the precise definition will be recalled in Section \ref{sec 2}). Compared to other positivity notions in geometry such as Griffiths positivity, Nakano positivity, and ampleness, RC-positivity is less developed. For example, it is not known if the tensor, exterior, or symmetric power of an RC-positive holomorphic vector bundle is still RC-positive. Somewhat surprisingly, the answer to this question is in the negative, and it is a consequence of our main result:

\begin{theorem}\label{main thm}
    If $X$ is a compact complex surface with a nonflat Ricci-flat K\"ahler metric, then the holomorphic tangent bundle $T^{1,0}X$ admits an RC-positive Hermitian metric.
\end{theorem}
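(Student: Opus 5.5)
The strategy is to start from the Ricci-flat Kähler metric $g$ itself, locate exactly where its Chern curvature fails to be RC-positive, and then repair those points with a conformal change $h=e^{\varepsilon\varphi}g$ for a suitable smooth function $\varphi$ and small $\varepsilon>0$.

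\emph{Step 1 (RC-positivity of $g$ via $W^-$).} Since $g$ is Kähler, its Chern curvature on $T^{1,0}X$ coincides with the Riemannian curvature. Being Ricci-flat, $g$ is Einstein with zero scalar curvature. For a Kähler surface, $W^+$ is determined by the scalar curvature, so $W^+=0$ and the full curvature operator reduces to $W^-$, acting on $\Lambda^-\cong\Lambda^{1,1}_0$. For fixed $v\in T^{1,0}_xX$ consider the Hermitian form $H_v(u)=R(u,\bar u,v,\bar v)$. Its trace is $\mathrm{Ric}(v,\bar v)=0$. Hence $H_v$ has a positive direction if and only if $H_v\neq 0$.

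I would express $H_v$ as $W^-$ applied to the primitive $(1,1)$-form built from $v$. This characterizes the bad set
\[Z=\{x\in X:\ H_v\equiv 0 \text{ for some } 0\neq v\in T^{1,0}_xX\}\]
algebraically in terms of $W^-_x$ (for instance, $W^-_x$ having a suitable kernel or degeneracy). The set $Z$ is closed. On any compact $K\subset X\setminus Z$, compactness of the unit sphere bundle gives a constant $c_K>0$ with
\[\max_{|u|=1}H_v(u)\ \ge\ c_K|v|^2 \quad\text{on } K.\]

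\emph{Step 2 (the bad set is small).} This is the main obstacle. On a four-dimensional Einstein manifold the Weitzenböck formula gives $\delta W^-=0$ and an elliptic equation of the form $\Delta W^- = W^-\ast W^-$ (quadratic terms, with no scalar term since $s=0$). Since $g$ is nonflat, $W^-\not\equiv 0$. Unique continuation, or real-analyticity of Einstein metrics in harmonic coordinates, then shows that $\{W^-=0\}$ is nowhere dense. I would then need the same for the degeneracy locus $Z$. My first attempt is to show that the condition defining $Z$ is a nontrivial real-analytic condition on $W^-$, so that $Z$ is a proper analytic subset with empty interior. If that fails, I would use the Weitzenböck identity to rule out an open set on which $W^-$ stays degenerate. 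All I really need is that $X\setminus Z$ contains a nonempty open set.

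\emph{Step 3 (conformal perturbation).} For $h=e^{\varepsilon\varphi}g$ the Chern curvature changes by
\[R^h(u,\bar u,v,\bar v)=e^{\varepsilon\varphi}\big(R(u,\bar u,v,\bar v)-\varepsilon\,\partial\bar\partial\varphi(u,\bar u)|v|^2_g\big).\]
Taking the trace in $u$ gives $-\varepsilon(\Delta\varphi)|v|^2$. Choose an open neighbourhood $U\supset Z$ whose complement has nonempty interior. Then pick $\varphi$ with $\Delta\varphi<0$ on $\overline U$, and $\Delta\varphi>0$ allowed on a region inside $X\setminus \overline U$. This is possible by solving $\Delta\varphi=f$ for any $f$ with $\int_X f=0$, negative on $\overline U$ and compensated on an open subset of the complement.

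On $\overline U$ the trace of $R^h(\cdot,\cdot,v,\bar v)$ is strictly positive, so a positive direction $u$ exists. On the compact set $K=X\setminus U\subset X\setminus Z$, Step 1 gives the uniform margin $c_K$, which survives a perturbation of size $\varepsilon\|\partial\bar\partial\varphi\|_{C^0}$ for $\varepsilon$ small. Hence $h$ is RC-positive everywhere.

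The delicate points are Step 2 and making $U$ a genuine neighbourhood of $Z$ while still leaving room for the positive part of $\Delta\varphi$.
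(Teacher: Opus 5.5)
Your Steps 1 and 3 follow the paper's route and are fine. Step 1 is the easy half of the paper's criterion that a Ricci-flat K\"ahler metric on a surface is RC-positive at $x$ if and only if $W^-_x$ is invertible. Step 3 is the paper's conformal-perturbation lemma. For the theorem you only need the inclusion $Z\subseteq\{x:\det W^-(x)=0\}$. This holds because $H_v$ is, up to a constant, $W^-$ applied to $\xi\wedge J\xi-|\xi|^2\omega/2$, and that anti-self-dual form is nonzero for $\xi\neq 0$. So the whole proof reduces to finding one point where $\det W^-\neq 0$, and Step 2 does not supply this.

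\emph{The gap in Step 2.} Nonflatness gives only $W^-\not\equiv 0$, and unique continuation gives only that $\{W^-=0\}$ is nowhere dense. A nonzero traceless symmetric endomorphism of the rank-three bundle $\wedge^-T^*X$ can be singular, with eigenvalues $\lambda,-\lambda,0$. So nothing you say rules out $\det W^-\equiv 0$ on all of $X$.
\begin{itemize}
\item Your first route fails: real-analyticity of $\det W^-$ turns ``nonzero at one point'' into ``nonzero on a dense open set''. It cannot turn ``$W^-\neq 0$'' into ``$\det W^-\neq 0$''.
\item Your second route is aimed at a local statement, and no local contradiction exists. On an open set where $\det W^-=0$ and $s=0$, the Weitzenb\"ock identity only says $|W^-|^2$ is subharmonic there.
\end{itemize}

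\emph{The missing idea.} Use Derdzi\'nski's formula $\Delta|W^-|^2=2|\nabla W^-|^2+s|W^-|^2-36\det W^-$ globally. The determinant appears because the cubic term is a multiple of $\mathrm{tr}(W^-)^3=3\det W^-$, as $W^-$ is traceless. Suppose $\det W^-\equiv 0$ on all of $X$. Integrating over the compact $X$ with $s=0$ gives $\int_X|\nabla W^-|^2\,dV_g=0$, so $\nabla W^-=0$. Since $R_g=0\oplus W^-$, the metric $g$ is locally symmetric. A locally symmetric Ricci-flat metric is flat, contradicting the hypothesis. Without this step, Step 3 has no nonempty open set of RC-positivity to start from.
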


Specifically, because K3 surfaces admit nonflat Ricci-flat K\"ahler metrics, we deduce the following corollary.

\begin{corollary}\label{cor}
    For a K3 surface $X$, the holomorphic tangent bundle $T^{1,0}X$ admits an RC-positive Hermitian metric. However, the exterior power $\bigwedge^2 T^{1,0}X$, the tensor power $\bigotimes^k T^{1,0}X$, and the symmetric power $S^kT^{1,0}X$, for $k\geq 2$, do not admit any RC-positive Hermitian metric. 
\end{corollary}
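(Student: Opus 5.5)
The first claim follows from Theorem~\ref{main thm} and Yau's solution of the Calabi conjecture, so the real work is the nonexistence statements. For those I would use an obstruction from Yang's work \cite{YangCamb}: \emph{if a holomorphic vector bundle $E$ over a compact complex manifold admits an RC-positive Hermitian metric, then $H^0(X,(E^*)^{\otimes m})=0$ for every $m\ge 1$.} The plan is to exhibit, for each bundle in question, a nonzero holomorphic section of $E^*$ or of $E^*\otimes E^*$.

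\textbf{Step 1: $T^{1,0}X$ is RC-positive.} A K3 surface has $c_1(X)=0$ and is K\"ahler (Siu). By Yau's theorem it carries a Ricci-flat K\"ahler metric. This metric is nonflat. Otherwise the Euler number would vanish by Chern--Gauss--Bonnet, whereas $\chi(X)=24$. Theorem~\ref{main thm} then gives an RC-positive metric on $T^{1,0}X$.

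\textbf{Step 2: the case $m=1$, which handles $\bigwedge^2 T^{1,0}X$.} I would recall the proof of Yang's obstruction for $m=1$. Let $h$ be RC-positive on $E$ and let $\sigma\in H^0(X,E^*)$ be nonzero. Set $f=|\sigma|^2_{h^*}$. Using $R^{E^*}=-(R^E)^{t}$, one gets
\[
\partial_v\bar\partial_{\bar v} f=|D'_v\sigma|^2+R^E(v,\bar v,a,\bar a),
\]
where $a\in E_x$ is $h$-dual to $\sigma(x)$. Now take $x$ to be a maximum point of $f$, so that $a\neq 0$. RC-positivity gives some $v$ with $R^E(v,\bar v,a,\bar a)>0$. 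Hence $\partial_v\bar\partial_{\bar v}f>0$ at a maximum, which is a contradiction. For general $m$ I would quote Yang's theorem rather than reprove it. Since $K_X\cong\mathcal O_X$, we have $\bigwedge^2 T^{1,0}X\cong K_X^{-1}\cong\mathcal O_X$. Its dual therefore has a nonvanishing section, so it is not RC-positive.

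\textbf{Step 3: the tensor and symmetric powers.} The nowhere-vanishing holomorphic symplectic form $\omega\in H^0(X,\bigwedge^2 T^*X)$ gives a holomorphic isomorphism $T^{1,0}X\cong T^*X$. This induces isomorphisms
\[
\textstyle\bigotimes^k T^{1,0}X\cong \bigotimes^k T^*X,\qquad S^kT^{1,0}X\cong S^kT^*X.
\]
So $E\cong E^*$ for $E=\bigotimes^k T^{1,0}X$ or $E=S^kT^{1,0}X$, and hence
\[
E^*\otimes E^*\cong E^*\otimes E\cong \operatorname{End}(E).
\]
The identity endomorphism is a nonzero holomorphic section, so $H^0(X,(E^*)^{\otimes 2})\neq 0$. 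Yang's obstruction with $m=2$ then rules out RC-positivity. This reduction to $m=2$ is needed for symmetric powers: K3 surfaces have no holomorphic symmetric differentials, so $H^0(S^kT^*X)=0$ and the $m=1$ case would not apply. For tensor powers, $\omega$ already gives a section of $\bigotimes^2T^*X$, and $\omega^{\otimes k/2}$ gives one of $\bigotimes^k T^*X$ when $k$ is even, but the uniform $m=2$ argument covers odd $k$ as well.

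The main point of care is the $m\ge2$ case of Yang's vanishing theorem. If it cannot simply be cited, I would adapt the maximum-principle argument to $\sigma\in H^0(\operatorname{End}E)$, taking a maximum of $|\sigma|^2$ and using RC-positivity along an eigenvector direction. This adaptation is the step I expect to be the main obstacle.
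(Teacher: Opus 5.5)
\textbf{There is a genuine gap in Step 3.} Steps 1 and 2 are fine. Step 2 is essentially the paper's argument that the trivial line bundle $\bigwedge^2T^{1,0}X\cong K_X^{-1}$ cannot be RC-positive.

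The problem is the obstruction you attribute to \cite{YangCamb}, namely $H^0(X,(E^*)^{\otimes m})=0$ for all $m\geq 1$. This is false for RC-positive bundles once $m\geq 2$, and your own Step 1 refutes it. Take $E=T^{1,0}X$. It is RC-positive by Theorem~\ref{main thm}, yet the holomorphic symplectic form $\omega$ is a nonzero section of $(E^*)^{\otimes 2}$, as you remark yourself. Equivalently, Step 3 never uses $k\geq 2$. Run with $k=1$, it gives $E^*\otimes E^*\cong\End E\ni\mathrm{Id}_E$ and would ``prove'' that $T^{1,0}X$ is not RC-positive.

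The maximum-principle repair fails for a structural reason. Maximize $|s(v_1,\ldots,v_m)|^2/\prod_i|v_i|^2$ for $s\in H^0(X,(E^*)^{\otimes m})$. The curvature term at the maximum is $\sum_i\langle R(u,\bar u)v_i,v_i\rangle/|v_i|^2$. So you need one direction $u$ that is positive on several vectors at once. That is \emph{uniform} RC-positivity, under which your vanishing does hold. The corollary shows precisely that RC-positivity is weaker. For $s=\omega$, the maximizing $v_1,v_2$ form an orthogonal basis. The term is then $\mathrm{tr}\,R(u,\bar u)$, the curvature of the trivial bundle $\det E$, so no contradiction arises. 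Your fallback through $\End E$ cannot work at all: $\mathrm{Id}_E$ is a section of $\End E$ for every bundle, of constant norm.

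\textbf{What RC-positivity does give.} Even weak RC-positivity gives $H^0(X,S^mE^*)=0$ for all $m\geq 1$ (Lemma~\ref{line bundle}). For a symmetric tensor you maximize $|s(v)|^2/|v|^{2m}$ over the projectivization of $E$, and only the single maximizing vector enters.

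So the missing idea is a nonzero holomorphic section of a \emph{symmetric} power of $(S^kT^{1,0}X)^*$. The paper uses the discriminant of binary forms of degree $k$. Being $SL_2(\mathbb{C})$-invariant, it globalizes via the trivialization $\sigma$ of $K_X$ to a nonzero $\Delta^k_\sigma\in H^0(X,S^{2(k-1)}(S^kT^{1,0}X)^*)$ (Lemma~\ref{lem long}). Tensor powers then follow because $S^kT^{1,0}X$ is a quotient of $\bigotimes^kT^{1,0}X$, and quotients of RC-positive bundles are RC-positive \cite[Theorem 3.5]{YangCamb}.

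\textbf{Partial salvage.} For even $k$, the pairing on $S^kT^{1,0}X$ induced by $\omega$ is symmetric and nondegenerate. Its quadratic form is therefore a nonzero section of $S^2(S^kT^{1,0}X)^*$; for $k=2$ it is a multiple of the discriminant. Also, $\omega^{\otimes k/2}\in H^0(X,(\bigotimes^kT^{1,0}X)^*)$ correctly disposes of even tensor powers via the $m=1$ case. For odd $k$, however, the induced pairing is skew and its quadratic form vanishes identically. There you need a higher-degree invariant such as the discriminant, together with the quotient argument for $\bigotimes^kT^{1,0}X$.
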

In Corollary \ref{cor}, the reason that $\bigwedge^2 T^{1,0}X=K^{-1}_X$ does not admit any RC-positive Hermitian metric is because $K^{-1}_X$ is the trivial line bundle (see Lemma \ref{line bundle}). That $S^kT^{1,0}X$ is not RC-positive for $k\geq 2$ is less direct, and will be proved in Section \ref{Section 3}. Since $S^k T^{1,0}X$ is the quotient of $\otimes^kT^{1,0}X$ and taking quotient preserves RC-positivity by \cite[Theorem 3.5]{YangCamb}, $\otimes^kT^{1,0}X$ is not RC-positive for $k\geq 2$.

As a consequence, the tensor, exterior, or symmetric power of an RC-positive bundle is not necessarily RC-positive. Corollary \ref{cor} also answers negatively a question of Yang \cite[Problem 4.15]{YangForum}, which asks whether the following are equivalent on a projective manifold $X$.
\begin{enumerate}
\renewcommand{\labelenumi}{(\arabic{enumi})}
\renewcommand{\theenumi}{\arabic{enumi}}

\item\label{cond:uniform-rc} $T^{1,0}X$ is uniformly RC-positive.
    
\item\label{cond:rc} $T^{1,0}X$ is RC-positive.

\item\label{cond:rc-connected} $X$ is rationally connected.
\end{enumerate}

According to the work of Yang \cite[Theorem 1.3]{YangForum}, 
(\ref{cond:uniform-rc}) implies both (\ref{cond:rc}) and 
(\ref{cond:rc-connected}). However, in view of Corollary \ref{cor}, we have 
\begin{corollary}\label{cor 2}
    (\ref{cond:rc}) does not imply (\ref{cond:uniform-rc}) or (\ref{cond:rc-connected})
\end{corollary}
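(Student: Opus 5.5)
We need to prove Corollary \ref{cor 2}: RC-positivity of $T^{1,0}X$ implies neither uniform RC-positivity of $T^{1,0}X$ nor rational connectedness of $X$. A single counterexample does both, and we take a K3 surface $X$, which is projective as long as we choose an algebraic one; algebraic K3 surfaces exist, for instance a smooth quartic in $\mathbb{P}^3$. By Corollary \ref{cor}, which rests on Theorem \ref{main thm} and the existence of a nonflat Ricci-flat Kähler metric (Yau's solution of the Calabi conjecture), $T^{1,0}X$ is RC-positive. So (\ref{cond:rc}) holds, and it remains to show that (\ref{cond:uniform-rc}) and (\ref{cond:rc-connected}) both fail.

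\textbf{Failure of rational connectedness.} A rationally connected projective manifold has $H^0(X,(\Omega^1_X)^{\otimes m})=0$ for all $m\ge 1$, and in particular $H^0(X,K_X^{\otimes m})=0$. For a K3 surface, however, $K_X$ is trivial, so $H^0(X,K_X)\cong\mathbb{C}\neq 0$. Equivalently, a nowhere-vanishing holomorphic $2$-form exists, and this is impossible on a rationally connected surface. Hence (\ref{cond:rc-connected}) fails.

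\textbf{Failure of uniform RC-positivity.} There are two routes.
\begin{enumerate}[(a)]
\item By Yang \cite[Theorem 1.3]{YangForum}, quoted above, (\ref{cond:uniform-rc}) implies (\ref{cond:rc-connected}). Since (\ref{cond:rc-connected}) fails, so does (\ref{cond:uniform-rc}).
\item Alternatively, one can use the fact that uniform RC-positivity is preserved under tensor and exterior powers, and in particular passes to $\bigwedge^2 T^{1,0}X$. Since Corollary \ref{cor} says $\bigwedge^2T^{1,0}X=K_X^{-1}$ is not RC-positive, $T^{1,0}X$ cannot be uniformly RC-positive.
\end{enumerate}
We use route (a), since it needs only the quoted theorem. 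The one point to check is that Yang's implication applies in our setting: his theorem is stated for compact Kähler, hence in particular projective, manifolds, and this is why we fix an algebraic K3 surface at the start.

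Combining the two parts, the algebraic K3 surface satisfies (\ref{cond:rc}) but neither (\ref{cond:uniform-rc}) nor (\ref{cond:rc-connected}), which proves the corollary. All the substantive work lies in Theorem \ref{main thm}; the corollary itself should present no obstacle beyond making sure $X$ is chosen projective.
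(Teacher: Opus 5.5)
Your proposal is correct and is essentially the paper's argument. You take a projective K3 surface, whose tangent bundle is RC-positive by Corollary~\ref{cor}. It is not rationally connected because $H^0(X,\Omega^2_X)\simeq\mathbb{C}$, and it therefore cannot be uniformly RC-positive by Yang's implication (\ref{cond:uniform-rc})$\Rightarrow$(\ref{cond:rc-connected}). Your alternative route (b) is also sound: the curvature on $\bigwedge^2 E$ has eigenvalues $\lambda_i+\lambda_j>0$, so uniform RC-positivity passes to $K_X^{-1}$. That route even works for non-projective K3 surfaces, but it is not needed here.
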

Indeed, the holomorphic tangent bundle of a  projective K3 surface is RC-positive by Corollary \ref{cor}, but a K3 surface satisfies \(H^0(X,\Omega^2_X)\simeq \mathbb{C}\), so it cannot be rationally connected. Hence (\ref{cond:rc}) does not imply (\ref{cond:uniform-rc}) or (\ref{cond:rc-connected}). Especially, 
RC-positivity is strictly weaker than uniform RC-positivity.

Regarding the remaining directions, it is known that, for a compact K\"ahler $X$, it is projective and rationally connected if and only if  $T^{1,0}X$ admits a Hermitian metric $h$ whose mean curvature $\sqrt{-1}\Lambda_\omega R^h$ is positive with respect to some Hermitian metric $\omega$ on $X$ by \cite[Theorem 1.7]{LiZhangZhang}. Such a Hermitian metric is RC-positive by \cite[Theorem 3.6]{YangCamb}, so (\ref{cond:rc-connected}) implies (\ref{cond:rc}).

Around the time we finished our manuscript, we learned about a recent preprint \cite{QSZ} where Qing, Sun, and Zhou prove that (\ref{cond:rc-connected}) implies (\ref{cond:uniform-rc}). They also show that the induced Fubini--Study metric on the Fermat quartic surface, which is a K3 surface, is RC-positive by a direct computation, and so (\ref{cond:rc}) does not imply (\ref{cond:uniform-rc}) or (\ref{cond:rc-connected}) as stated in our Corollary \ref{cor 2}. However, our results appear to demonstrate more examples than the Fermat quartic surface, and our approach is different from theirs as we explain below.

The proof of Theorem \ref{main thm} can be separated as two parts. First, given a Ricci-flat K\"ahler metric $h$ on a compact complex surface, we characterize RC-positivity of $h$ in terms of invertibility of the anti-self-dual part of the Weyl operator (Lemma \ref{RC criterion 2}), and then  we use  the Weitzenb\"ock formula on the four dimensional Einstein manifold due to Derdzi\'nski   (\cite{Derdzinski,besse1987einstein,Weitzenbockformula}) to show the existence of a nonempty open set in $X$ where the metric $h$ is RC-positive. Secondly, we conformally perturb $h$ to get a new metric that is RC-positive everywhere in $X$. While the second part of the proof is independent of the dimension of $X$, the first part relies heavily on the four dimension geometry. 

Let us come back to Yang's question \cite[Problem 4.15]{YangForum}. There is a more general version concerning an arbitrary holomorphic vector bundle $E$ rather than the holomorphic tangent bundle $T^{1,0}X$. To that end, we define  another positivity notion, which we mentioned earlier already, lying between uniform RC-positivity and RC-positivity. Let $(E,h)$ be a Hermitian holomorphic vector bundle and denote the Chern curvature by $R^h.$ Let $\omega$ be a Hermitian metric on $X$, and take the trace of the Chern curvature $R^h$ with respect to $\omega$ to get $\sqrt{-1}\Lambda_\omega R^{h}\in\End E$. Following \cite[Section 3.1]{reprintDiffofcomplexbundles}, we call $\sqrt{-1}\Lambda_\omega R^{h}$ the mean curvature of $(E,h)$ with respect to $\omega$. If we choose $E=T^{1,0}X$ and $h=\omega$, then $\sqrt{-1}\Lambda_\omega R^{h}$ is the second Chern--Ricci curvature (see \cite[Page 188]{YangCamb}).

A holomorphic vector  bundle $E$ is called mean curvature positive if it admits a Hermitian metric $h$ whose mean curvature $\sqrt{-1}\Lambda_\omega R^{h}$ is positive with respect to some Hermitian metric $\omega$ on $X$. There is a relation:
\begin{equation*}
    E \text{ is uniformly RC-positive} \Rightarrow E \text{ is mean curvature positive} \Rightarrow E \text{ is RC-positive.}
\end{equation*}
The first implication is in \cite[Proposition 3.6]{LiZhangZhang}, and the second is in \cite[Theorem 3.6]{YangCamb}. By Corollary \ref{cor}, RC-positivity does not imply mean curvature positivity; this is because mean curvature positivity implies RC-positivity for all the exterior powers by \cite[Theorem 3.6]{YangCamb}. The remaining question is whether the converse of the first direction is true, but we do not have an answer now. Nevertheless, as we have mentioned already, when $E=T^{1,0}X$ and $X$ is K\"ahler, mean curvature positivity of $T^{1,0}X$ implies that $X$ is projective and rationally connected by \cite[Theorem 1.7]{LiZhangZhang}, which implies $T^{1.0}X$ is uniformly RC-positive by the recent work in \cite{QSZ}.

We also note that when $\rank E=1$, RC-positivity implies uniform RC-positivity by definition, so the three positivity notions are equivalent.

Let us mention one more positivity notion related to RC-positivity.
Denote by $O_{P(E^*)}(1)$ the hyperplane section bundle over the projectivized bundle $P(E^*)$, where $E^*$ is the dual bundle of $E$. A bundle $E$ is called weakly RC-positive if there is a metric $h$ on $O_{P(E^*)}(1)$ whose curvature $R^h$ is positive on the fiber $P(E^*_x)$ for any $x\in X$ and $R^h$ has at least $r$ positive eigenvalues at every point in $P(E^*)$.

In \cite[Question 7.11]{YangCamb}, Yang asked whether weak RC-positivity of $E$ implies RC-positivity of $E$ (the converse is true, \cite[Proposition 4.1]{YangCamb}). In light of this question's similarity with a conjecture of Griffiths and Berndtsson's results in \cite{Berndtsson09}, it is natural to ask whether the following are true: 
\begin{enumerate}
\renewcommand{\labelenumi}{(\arabic{enumi})}
\renewcommand{\theenumi}{\arabic{enumi}}
    \item\label{det} Weak RC-positivity of $E$ implies RC-positivity of $S^kE\otimes \det E$ for any $k\geq 0$.
    
    \item\label{no det} Weak RC-positivity of $E$ implies RC-positivity of $S^kE$ for large $k$.
\end{enumerate}

\begin{corollary}
    (\ref{det}) and (\ref{no det}) are both false.
\end{corollary}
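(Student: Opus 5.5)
Take $E=T^{1,0}X$ for a K3 surface $X$, or more generally a compact complex surface with a nonflat Ricci-flat Kähler metric. It then suffices to verify three facts: $E$ is weakly RC-positive, $E\cong S^0E\otimes\det E$ twisted appropriately gives a counterexample to (\ref{det}), and $S^kE$ is not RC-positive for large $k$.

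For weak RC-positivity, recall from \cite[Proposition 4.1]{YangCamb} that an RC-positive bundle is weakly RC-positive. By Theorem \ref{main thm}, $T^{1,0}X$ admits an RC-positive Hermitian metric, so it is weakly RC-positive.

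Statement (\ref{no det}) then fails by Corollary \ref{cor}. For a K3 surface, $S^kT^{1,0}X$ admits no RC-positive Hermitian metric for every $k\geq 2$, hence not for any large $k$.

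For (\ref{det}), use that $\det T^{1,0}X=K_X^{-1}$ is holomorphically trivial on a K3 surface. This gives
\[
S^kE\otimes\det E\cong S^kE ,
\]
and RC-positivity depends only on the isomorphism class of the holomorphic bundle, since any metric can be transported along an isomorphism. Taking $k=2$, Corollary \ref{cor} shows that $S^2E\otimes\det E$ is not RC-positive. The case $k=0$ also works: $\det E$ is the trivial line bundle, which is not RC-positive by Lemma \ref{line bundle}. Either choice contradicts (\ref{det}), which asserts RC-positivity for every $k\geq 0$.

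Since every ingredient is already established in Theorem \ref{main thm} and Corollary \ref{cor}, there is no substantial obstacle. The only points needing care are that weak RC-positivity follows from RC-positivity, not the reverse, and that tensoring with the trivial line bundle does not change the bundle up to isomorphism.
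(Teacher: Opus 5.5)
Your proposal is correct and follows the paper's argument: $T^{1,0}X$ on a K3 surface is RC-positive by Theorem \ref{main thm}, hence weakly RC-positive. For (\ref{det}) with $k=0$, $\det E$ is the trivial line bundle and fails RC-positivity by Lemma \ref{line bundle}; for (\ref{no det}), Corollary \ref{cor} rules out $S^kE$ for every $k\ge 2$. Your extra route for (\ref{det}) via $S^2E\otimes\det E\cong S^2E$ is also valid. One blemish: your opening phrase identifying $E$ with $S^0E\otimes\det E$ is wrong as written, since $S^0E\otimes\det E=\det E$, but you use the correct identification later.
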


Indeed, for (\ref{det}),   taking $k=0$, $ E=T^{1,0}X$, and $\dim X=2$, we are basically asking whether weak RC-positivity of $T^{1,0}X$ implies RC-positivity of $ \det T^{1,0}X=\bigwedge^2 T^{1,0}X$, and the answer is no by Corollary \ref{cor}. For (\ref{no det}), we use again Corollary \ref{cor} to conclude it is false.

The second named author would like to thank Po-Wei Lee for discussions and encouragements. Part of the research was done while the second named author was visiting KIAS, and he thanks the institute for the excellent working environment and hospitality. The authors benefit from conversations with ChatGPT 5.6 Sol, especially the crucial connection between four dimension geometry and RC-positivity. The research is partially supported by National Science and Technology Council, grant number 115-2115-M-002-004-MY3.

\section{Preliminaries}\label{sec 2}

\subsection{RC-positivity of holomorphic vector bundles}

Let $X$ be a complex manifold of dimension $n$ and $E\to X$ a holomorphic vector bundle of rank $r$ equipped with a  Hermitian metric $h$.  
The Chern connection $\nabla$ of $(E,h)$ is the unique connection that is compatible with $h$ and the complex structure on $E$. The Chern curvature $R^{h}:=\nabla^2$ is a $(1,1)$-form with values in $\End E$,
$$
R^{h}\in C^{\infty}_{1,1} (X,\End E).
$$
Let $\{z_i\}_{i=1}^n$ be local coordinates in $X$ and $\{e_\alpha\}_{\alpha=1}^r$ be a local holomorphic frame of $E$. Define $h_{\alpha\bar\beta}=\langle e_\alpha,e_\beta\rangle_h$. In terms of local coordinates, the Chern curvature satisfies 
$$R^{h}=\sum R^\gamma_{i\bar{j}\alpha}dz^i\wedge d\bar{z}^j\otimes e^\alpha\otimes e_\gamma,$$
where $\{e^\alpha\}$ is the dual frame on $E^*$ and we use the isomorphism $\Hom(E,E)\simeq E^*\otimes E$; moreover, $R^\gamma_{i\bar{j}\alpha}=h^{\gamma\bar{\beta}}R_{i\bar{j}\alpha\bar{\beta}}$
with $$R_{i\bar{j}\alpha\bar{\beta}}:=-\frac{\partial h_{\alpha\bar{\beta}}}{\partial z_i \partial \bar{z}_j}+h^{\gamma \bar{\delta}}\frac{\partial h_{\alpha \bar{\delta}} }{\partial z_i}\frac{\partial h_{\gamma\bar{\beta}}}{\partial \bar{z}_j}.$$
Let $T^{1,0}X$ be the holomorphic tangent bundle of $X$. For $u\in T^{1,0}_xX$ and $v\in E_x$, we consider the endomorphism part of $R^h$ acting on $v$ and then taking inner product with $v$ to get the $(1,1)$-form $\langle R^hv,v\rangle_h$. We also  consider the $(1,1)$-form part of $R^h$ acting on $u$ to get the endomorphism  $R^h(u,\bar{u})$. Of course, $\langle R^hv,v\rangle_h(u,\bar u)=\langle R^h(u,\bar{u})v,v\rangle_h$. Using local coordinates,     
$$
\langle R^hv,v\rangle_h(u,\bar u)=\langle R^h(u,\bar{u})v,v\rangle_h=\sum R_{i\bar{j}\alpha\bar{\beta}}u_i\bar{u}_jv_\alpha \bar{v}_\beta, 
$$
where $u=\sum u_i\partial/\partial z_i, v=\sum v_\alpha e_\alpha$.
\begin{definition}
\
\begin{enumerate}
\item A Hermitian metric $h$ is RC-positive at $x\in X$ if, for every nonzero $v\in E_x$, there is a nonzero $u\in T_x^{1,0}X$ such that $\langle R^hv,v\rangle_h(u,\bar u)>0$.

\item A Hermitian metric $h$ is uniformly RC-positive at $x\in X$ if there is a nonzero
$u\in T_x^{1,0}X$ such that $\langle R^hv,v\rangle_h(u,\bar u)>0$ for every nonzero $v\in E_x$.
\end{enumerate}
\end{definition}

A Hermitian metric $h$ is called (uniformly) RC-positive if it is (uniformly) RC-positive
at every point of $X$. A holomorphic vector bundle $E$ is called (uniformly) RC-positive if it admits such a metric. It follows directly from the definitions that a uniformly RC-positive bundle is RC-positive. The converse need not hold, as Corollary \ref{cor 2} shows.

We begin with a lemma, which is a slight improvement of \cite[Proposition 4.2]{YangCamb}.
\begin{lemma}\label{line bundle}
    Let $E$ be a holomorphic vector bundle of rank $r$ over  a compact complex manifold $X$ of dimension $n$. If $E$ is weakly RC-positive, then $H^0(X, S^kE^*)=0$ for any $k\geq 1$. In particular, trivial bundles over a compact complex manifold cannot be RC-positive. 
\end{lemma}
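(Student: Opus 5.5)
Let $\pi\colon P(E^*)\to X$ be the projectivization, with hyperplane bundle $L:=O_{P(E^*)}(1)$. The plan is to argue by contradiction: a nonzero section of $S^kE^*$ will turn into a nonzero holomorphic section of $L^{-k}$, and a maximum principle on $P(E^*)$ will rule that out. The convention is the one in which $\pi_*O_{P(E^*)}(k)=S^kE$, so that $H^0(X,S^kE^*)\cong H^0(P(E^*),L^{-k})$. The reason is that a point of $P(E^*)$ over $x$ is a line in $E^*_x$, i.e.\ a hyperplane in $E_x$; equivalently, $L^{-1}$ is the tautological subbundle of $\pi^*E^*$, and a section $s\in H^0(X,S^kE^*)$, viewed as a degree $k$ polynomial on $E_x$, restricts to $L^{-k}$. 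Concretely, $s$ defines $\tilde s\in H^0(P(E^*),L^{-k})$ with $\tilde s\ne0$ if $s\ne 0$. (I will double-check which of $E$, $E^*$ is tautological, but either way the identification $\pi_*(L^{-k})=S^kE^*$ or its dual is exactly what is needed; the definition of weak RC-positivity is designed to make this match.)

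Suppose $s\neq0$. Let $h$ be the weakly RC-positive metric on $L$, and give $L^{-k}$ the induced metric $h^{-k}$. Put $f=\log|\tilde s|^2_{h^{-k}}$, defined off the zero set of $\tilde s$. Since $P(E^*)$ is compact (as $X$ is), $f$ attains its maximum at some point $p$ where $\tilde s(p)\ne0$. Near $p$, the Poincaré–Lelong / Lelong computation gives
$$\sqrt{-1}\partial\bar\partial f=-\Theta(L^{-k},h^{-k})=k\,\Theta(L,h)$$
as real $(1,1)$-forms, with the curvature sign convention chosen so that $\sqrt{-1}\partial\bar\partial\log|\sigma|^2=-\Theta$ for a holomorphic section $\sigma$.

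At an interior maximum, the complex Hessian $\sqrt{-1}\partial\bar\partial f(p)$ is negative semidefinite. On the other hand, by hypothesis $\Theta(L,h)$ has at least $r\geq1$ positive eigenvalues at $p$. So there is a direction $\xi$ with $\sqrt{-1}\partial\bar\partial f(\xi,\bar\xi)>0$, which contradicts the maximum. Hence $s=0$. Only the "at least one positive eigenvalue" part of the hypothesis is actually used, which explains why this is a slight improvement of \cite[Proposition 4.2]{YangCamb}.

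For the last claim, take $E=X\times\mathbb{C}^r$. If $E$ were RC-positive, it would be weakly RC-positive by \cite[Proposition 4.1]{YangCamb}. But $E^*$ is also trivial, so $H^0(X,E^*)\ne0$, contradicting the case $k=1$. The only real obstacle is bookkeeping: checking the identification of $H^0(X,S^kE^*)$ with $H^0(P(E^*),O(-k))$ under the paper's projectivization convention, and checking the sign of curvature versus $\partial\bar\partial\log|\tilde s|^2$. The maximum principle step itself is routine.
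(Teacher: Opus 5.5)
Your argument has a genuine gap, at exactly the step you called bookkeeping. For $r\geq 2$ there is no identification $H^0(X,S^kE^*)\cong H^0(P(E^*),L^{-k})$, in either convention.

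With the paper's $P(E^*)$, a point over $x$ is a line $\ell\subset E^*_x$. $L^{-1}$ is the tautological line in $\pi^*E^*$, and $\pi_*L^{k}=S^kE$. A polynomial on $E_x$ cannot be restricted to a line in $E^*_x$. Moreover, $L^{-k}$ restricts to $O_{\mathbb{P}^{r-1}}(-k)$ on every fiber, so $\pi_*L^{-k}=0$ and $H^0(P(E^*),L^{-k})=0$ for trivial reasons. Your maximum principle applied on a single fiber, using only fiberwise positivity, already shows this. So the contradiction you derive says nothing about $S^kE^*$.

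If you switch to the other projectivization (lines in $E$), sections of $S^kE^*$ do become sections of $O(k)$. But then you get a section of a \emph{positive} power of the line bundle. A maximum of $\log|\tilde s|^2$ is contradicted only by a \emph{negative} eigenvalue, which the hypothesis does not supply.

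Your own remark that only one positive eigenvalue is used is a warning sign. Take the flat metric on $E=X\times\mathbb{C}^2$. It induces on $O_{P(E^*)}(1)=\mathrm{pr}_2^*O_{\mathbb{P}^1}(1)$ over $X\times\mathbb{P}^1$ a metric that is positive on fibers and has a positive eigenvalue everywhere (exactly one, not $r=2$). Yet $H^0(X,E^*)\cong\mathbb{C}^2$. Your argument is valid only for $r=1$, where $P(E^*)=X$ and $L^{-k}=S^kE^*$. The paper needs $r\geq 2$, for instance $E=S^kT^{1,0}X$.

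The missing idea is that $H^0(X,S^kE^*)$ appears on $P(E^*)$ only in fiber-top degree. By relative duality,
$H^0(X,S^kE^*)\cong H^{r-1}\big(P(E^*),L^{-k-r}\otimes\pi^*\det E\big)$.
Killing an $H^{r-1}$ requires a cohomological vanishing theorem of Andreotti--Grauert type, not a maximum principle. This is where ``at least $r$ positive eigenvalues'' is genuinely used, and such a theorem only gives vanishing for large $k$.

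The paper does not reprove this. It imports the large-$l$ vanishing from \cite[Theorem 4.5]{YangCamb} and upgrades it to all $k\geq 1$ by a power trick. If $s\in H^0(X,S^kE^*)$, then $s^l\in H^0(X,S^{lk}E^*)$ vanishes for $l\gg 0$. Since $s(x)^l=0$ as a polynomial on $E_x$, it follows that $s(x)=0$.

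Your deduction of the final claim about trivial bundles, via \cite[Proposition 4.1]{YangCamb} and the case $k=1$, is fine once the main statement is available.
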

\begin{proof}
    By \cite[Theorem 4.5]{YangCamb}, $H^0(X, S^lE^*)=0$ for large $l$. Now, fix $k \geq 1$ and let $s\in H^0(X, S^kE^*)$. Denote by $\mu_{l,k}$ the canonical map $S^l(S^kE^*)\to S^{lk}E^*$. For $f_1,\ldots,f_l\in S^kE^*$,  $\mu_{l,k}(f_1\odot \cdots \odot f_l)=f_1\cdots f_l$ where $\odot$ is the symmetric product in $S^l(S^kE^*)$ and $f_1\cdots f_l$ is the ordinary polynomial multiplication. We have $\mu_{l,k}(s^{\odot l})=s^l\in H^0(X, S^{lk}E^*)$. For large $l$, $H^0(X, S^{lk}E^*)=0$ by \cite[Theorem 4.5]{YangCamb}, so $s^l=0$. For $x\in X$, $s^l(x)=s(x)^l$ and $s(x)$ is a homogeneous polynomial of degree $k$ on $E_x$, so $s^l(x)=0$ implies $s(x)=0$. As a result, any $s\in H^0(X, S^kE^*)$ is zero, and hence   $H^0(X, S^kE^*)=0$ for any $k\geq 1$.
\end{proof}



We make a simple observation below.
\begin{lemma}\label{RC criterion'}
     Let $(E,h)$ be a Hermitian holomorphic vector bundle over a complex manifold $X$ of dimension $n$ and let $\omega$ be a Hermitian metric on $X$. Assume the mean curvature $\sqrt{-1}\Lambda_\omega R^h$ is nonnegative. The metric $h$ is RC-positive at $x\in X$ if and only if the $(1,1)$-form $\langle R^h v,v\rangle_h$ is nonzero for any nonzero $v\in  E_x$.
\end{lemma}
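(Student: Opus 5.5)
Fix $x\in X$ and a nonzero $v\in E_x$, and consider the real $(1,1)$-form $\theta_v:=\langle R^h v,v\rangle_h$ at $x$. Let $H_v$ be the Hermitian form on $T^{1,0}_xX$ given by $H_v(u,u)=\theta_v(u,\bar u)$. RC-positivity at $x$ says exactly that, for every nonzero $v$, the form $H_v$ has a positive value somewhere; that is, $H_v$ is not negative semidefinite. The plan is to show that, under the hypothesis on the mean curvature, $H_v$ being negative semidefinite forces $H_v=0$. The statement then reduces to the equivalence \[ \text{not negative semidefinite} \iff \text{nonzero}. \]

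The key identity is a trace formula. Choose an $\omega$-unitary basis $\{\epsilon_i\}$ of $T^{1,0}_xX$. Up to the normalization convention of $\Lambda_\omega$, which yields a positive constant factor, we have
$$\langle \sqrt{-1}\Lambda_\omega R^h\, v,v\rangle_h = \sum_{i=1}^n \langle R^h(\epsilon_i,\bar\epsilon_i)v,v\rangle_h=\operatorname{tr}_\omega H_v .$$
I will check the sign convention so that this indeed equals $+\operatorname{tr}_\omega H_v$, consistent with positive mean curvature being the condition used in \cite[Theorem 3.6]{YangCamb}. By hypothesis the left side is $\ge 0$, so $\operatorname{tr}_\omega H_v\ge 0$.

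Now I prove the two directions.

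\emph{Forward direction.} If $h$ is RC-positive at $x$, then for each nonzero $v$ there is some $u$ with $H_v(u,u)>0$. Hence $\theta_v\neq 0$. This direction does not use the hypothesis.

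\emph{Converse.} Suppose $\theta_v\ne0$ but $H_v(u,u)\le 0$ for all $u$. Diagonalize $H_v$ with respect to $\omega$. Its eigenvalues $\lambda_1,\dots,\lambda_n$ are all $\le 0$, and their sum $\operatorname{tr}_\omega H_v$ is $\ge0$. Therefore every $\lambda_i=0$, so $H_v=0$ and $\theta_v=0$, a contradiction. Consequently some $u$ satisfies $H_v(u,u)>0$, which is RC-positivity at $x$.

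The only delicate point is the sign and normalization of $\Lambda_\omega$ together with the factor $\sqrt{-1}$. I need to make sure that $\sqrt{-1}\Lambda_\omega R^h$ corresponds to $+\sum_i R^h(\epsilon_i,\bar\epsilon_i)$ in the convention $\langle R^hv,v\rangle(u,\bar u)=\sum R_{i\bar j\alpha\bar\beta}u_i\bar u_j v_\alpha\bar v_\beta$. I would settle this by a one-line local computation with $\omega=\sqrt{-1}\sum dz^i\wedge d\bar z^i$ at $x$.
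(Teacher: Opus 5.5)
Your proof is correct and is the same argument as the paper's. The forward direction is immediate from the definition. For the converse, the identity $\langle\sqrt{-1}\Lambda_\omega R^h v,v\rangle_h=\mathrm{tr}_\omega H_v\ge 0$ forces a negative semidefinite $H_v$ to vanish, which contradicts $\langle R^h v,v\rangle_h\neq 0$; the sign and normalization check you flag changes this trace only by a positive constant, and the paper uses it without comment.
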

\begin{proof}
    The statement is pointwise, so $X$ does not have to be compact. The only if part is direct by the definition of RC-positivity. So, let us assume the $(1,1)$-form $\langle R^h v,v\rangle_h$ is nonzero for any nonzero $v\in  E_x$.
    Fix a nonzero $v\in  E_x$. If all the eigenvalues of the $(1,1)$-form $\langle R^h v,v\rangle_h$ are nonpositive, then all the eigenvalues have to be zero because    $\sqrt{-1}\Lambda_\omega\langle R^h v,v\rangle_h   =\langle  \sqrt{-1}\Lambda_\omega R^h v,v\rangle_h \geq 0$ by the assumption in the lemma. But this implies the $(1,1)$-form $\langle R^h v,v\rangle_h=0$, a contradiction. Therefore, the $(1,1)$-form $\langle R^h v,v\rangle_h$ has at least one positive eigenvalue, and hence $h$ is RC-positive at $x$. 
\end{proof}

The lemma below shows that a Hermitian metric, which is RC-positive on some open set $U\subset X$, can be made RC-positive on the entire $X$ after a conformal perturbation.
\begin{lemma}\label{lem conformal}
   Let $(E,h)$ be a Hermitian holomorphic vector bundle over a compact complex manifold $X$ of dimension $n$ and let $\omega$ be a Hermitian metric on $X$. Assume the mean curvature $\sqrt{-1}\Lambda_\omega R^h$ is nonnegative. If  $h$ is RC-positive on a nonempty open set $U\subset X$, then there exists $f\in C^{\infty}(X,\mathbb{R})$  such that the  conformal change $he^{-f}$ is RC-positive on $X$  
\end{lemma}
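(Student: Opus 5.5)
Under the conformal change $h\mapsto he^{-f}$, the Chern curvature becomes $R^{he^{-f}}=R^h+\partial\bar\partial f\otimes \mathrm{Id}_E$. Hence for $v\in E_x$ we get
\[
\langle R^{he^{-f}}v,v\rangle_{he^{-f}}=e^{-f}\bigl(\langle R^hv,v\rangle_h+|v|_h^2\,\partial\bar\partial f\bigr),
\]
where we use the convention that $\langle R^hv,v\rangle_h(u,\bar u)$ is real and $\partial\bar\partial f(u,\bar u)=\sum f_{i\bar j}u_i\bar u_j$. The plan is to pick $f$ whose complex Hessian $\sqrt{-1}\partial\bar\partial f$ has a positive eigenvalue at every point outside $U$. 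At such points RC-positivity becomes automatic for every $v$, as soon as $|\partial\bar\partial f|$ dominates the curvature term. Inside $U$ we only need to make sure we don't destroy the positivity we already have.

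\textbf{Constructing $f$.} Fix a point $p\in U$ and a small closed coordinate ball $\bar B\subset U$ around it. I would first produce a smooth function $g$ on $X$ whose complex Hessian has at least one positive eigenvalue at every point of $X\setminus B$. A natural candidate is a Morse function $g$ on the compact manifold $X$ whose critical points all lie in $B$; this exists because critical points can be pushed into a ball by an isotopy, e.g.\ by composing with a diffeomorphism moving them into $B$. Away from the critical points $dg\neq0$, so taking $f=e^{\lambda g}$ gives
\[
\partial\bar\partial f=\lambda e^{\lambda g}\bigl(\partial\bar\partial g+\lambda\,\partial g\wedge\bar\partial g\bigr).
\]
Evaluate this on $u$ dual to $\partial g$, i.e.\ $u_i=\overline{\partial_i g}$ up to the metric. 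The term $\lambda|\partial g(u)|^2$ is positive and dominates $\partial\bar\partial g(u,\bar u)$ for $\lambda$ large, uniformly on the compact set $X\setminus B$ where $|\partial g|\ge c>0$. It dominates $\langle R^h v,v\rangle_h(u,\bar u)/|v|^2$ there as well, because the curvature is bounded and $\lambda^2e^{\lambda g}|\partial g|^4$ grows faster than everything else. So for large $\lambda$, $he^{-f}$ is RC-positive on $X\setminus B$, with the same $u$ working for all $v$.

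\textbf{Points of $B$.} The main obstacle is $B\subset U$ itself, where $\partial\bar\partial f$ is uncontrolled and could cancel the positivity of $h$. Here I would use Lemma~\ref{RC criterion'} differently: replace $f$ by a modification that is constant near $\bar B$. Concretely, choose a Morse function with critical points in a smaller ball $B'\Subset B$, and arrange $g$ to be locally constant on a neighborhood of $\bar B'$. This is possible by instead taking $g$ smooth, constant on $\bar B'$ and without critical points on $X\setminus \bar B'$; such a $g$ exists since $X\setminus B'$ is a cobordism from the sphere $\partial B'$ to the empty set, which admits a function with no critical points after removing a collar, or simply by taking $g$ to be a Morse function with all critical points inside $B'$ and flattening it there. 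The transition region $B\setminus B'$ is the delicate part. There I would make $h$'s RC-positivity robust by compactness: on $\bar B\subset U$, RC-positivity is an open condition and $\bar B$ is compact, so there is $\epsilon>0$ such that adding any real $(1,1)$-form of norm $<\epsilon$ preserves it. In the region where $\partial g$ is small, I would take $\lambda$ and the shape of $g$ so that $|\partial\bar\partial f|<\epsilon$ on the set where $|\partial g|<c$, while the argument above handles the set where $|\partial g|\ge c$. Balancing these two requirements (flatness near $B'$ versus large gradient term elsewhere) is the step I expect to need care. I would first try scaling $g$ so that $g\le -M$ on $B$, making $e^{\lambda g}$ tiny there, and compare it with $g\ge0$ off $B$.
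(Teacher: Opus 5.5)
Your proposal has a genuine gap, and it cannot be fixed by tuning $g$ and $\lambda$. Your argument never uses the hypothesis $\sqrt{-1}\Lambda_\omega R^h\ge 0$, and without that hypothesis the statement is false.

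\textbf{A counterexample to what your scheme would prove.} Take the trivial line bundle with $h=e^{-\phi}$, where $\sqrt{-1}\partial\bar\partial\phi>0$ on a coordinate ball $U$. Then $h$ is RC-positive on $U$. But for every $f$, the metric $he^{-f}=e^{-(\phi+f)}$ fails to be RC-positive at a maximum point of $\phi+f$, since $\sqrt{-1}\partial\bar\partial(\phi+f)\le 0$ there. Your scheme is: make $\sqrt{-1}\partial\bar\partial f$ beat the curvature on $X\setminus B$, and keep it below the RC-margin of $h$ on $B$. It applies verbatim to this example, so it cannot be completed.

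\textbf{Where your construction breaks.} The maximum of $f$ is a critical point and lies in $B$, where $\sqrt{-1}\partial\bar\partial f\le 0$. For a Morse $g$ and $f=Ae^{\lambda g}$, the size of this negative term is of order $A\lambda e^{\lambda\max g}$. The quantity $A\lambda^2e^{\lambda g}|\partial g|^2$ that must dominate the curvature off $B$ is smaller than this by a factor of order $e^{\lambda(\max g-\min_{X\setminus B}g)}/\lambda\to\infty$.

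\textbf{Your proposed repairs are impossible.}
\begin{itemize}
\item A smooth $g$ that is constant on $\overline{B'}$ and has no critical points on $X\setminus\overline{B'}$ attains both extrema on $\overline{B'}$, so it is constant. The cobordism claim is also false.
\item ``$g\le -M$ on $B$, $g\ge 0$ off $B$'' contradicts continuity across $\partial B$. It also contradicts the fact that the global maximum of $g$ is a critical point inside $B$.
\end{itemize}

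\textbf{The missing idea.} Use the nonnegative mean curvature on $X\setminus B$, so that only an arbitrarily small perturbation with positive trace is needed there, rather than a large Hessian. The paper proceeds as follows.
\begin{itemize}
\item Replace $\omega$ by the conformal Gauduchon metric $\hat\omega$; the mean curvature stays nonnegative.
\item Take $\rho=1-c\psi$, with $\psi\ge 0$ supported in a ball $B$ satisfying $\overline{B}\subset U$, and $c$ chosen so that $\int_X\rho\,dV_{\hat\omega}=0$.
\item Solve $\Delta_{\hat\omega}f=\rho$; the Gauduchon condition is what makes this solvable.
\item For $h_\varepsilon=e^{-\varepsilon f}h$, on $X\setminus B$ one gets $\langle\sqrt{-1}\Lambda_{\hat\omega}R^{h_\varepsilon}v,v\rangle_{h_\varepsilon}\ge\varepsilon e^{-\varepsilon f}|v|_h^2>0$. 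Hence $\langle R^{h_\varepsilon}v,v\rangle_{h_\varepsilon}$ has a positive eigenvalue for every $\varepsilon>0$.
\item On $\overline{B}$, the compactness margin $\delta$ that you correctly identified absorbs the error $\varepsilon\sup_X|\sqrt{-1}\partial\bar\partial f|_{\hat\omega}$ once $\varepsilon$ is small.
\end{itemize}
On $X\setminus B$ the positive direction may depend on $v$, which is all RC-positivity asks. Insisting on one $u$ for all $v$, as you do, is much stronger than needed. The negativity forced by the maximum principle is thereby confined to $B$ and made small by $\varepsilon$, instead of being fought with a large Hessian.
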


\begin{proof}
Let $\hat{\omega}$ be the Gauduchon metric conformal to $\omega$. So, $\hat{\omega}=e^u\omega$ for some $u\in C^\infty(X,\mathbb{R})$, and $\sqrt{-1}\Lambda_{\hat{\omega}} R^h=e^{-u}\sqrt{-1}\Lambda_\omega R^h$ is still nonnegative. Choose an open set $B$ whose  closure is contained in $U$. For $v\in E$, we have the $(1,1)$-form $\langle  R^h v, v\rangle_h$, and we  denote by $$\lambda_{\max, \hat{\omega}}(\langle  R^h v, v\rangle_h)$$ the largest eigenvalue of $\langle  R^h v, v\rangle_h$ with respect to $\hat{\omega}$. If $v \in E|_U$, then $\lambda_{\max, \hat{\omega}}(\langle  R^h v, v\rangle_h)>0$ because $h$ is RC-positive on $U$.

 Consider the continuous map $ \{v\in E|_{\overline{B}}:|v|_h=1 \}\ni v\mapsto \lambda_{\max, \hat{\omega}}(\langle  R^h v, v\rangle_h)$. As $\{v\in E|_{\overline{B}}:|v|_h=1 \}$ is compact, $\lambda_{\max, \hat{\omega}}(\langle  R^h v, v\rangle_h)$ achieves its minimum somewhere. The minimum, say $\delta$, is positive because $h$ is RC-positive on $\overline{B}\subset U$.

Let $\psi$ be a real-valued smooth function on $X$ supported in $B$, $\psi\geq 0$, and $\int_X \psi dV_{\hat{\omega}} >0$. Define
\begin{equation}
    \rho := 1- \frac{ \int_X dV_{\hat{\omega}} }{ \int_X \psi dV_{\hat{\omega}} } \psi.
\end{equation}
Then, $\rho = 1$ on $X \setminus B$. Moreover, $\int_X \rho dV_{\hat{\omega}}=0$, so there is  $f\in C^{\infty} (X,\mathbb{R})$ such that $\Delta_{\hat{\omega}} f=\Lambda_{\hat{\omega}}\sqrt{-1}\partial\bar{\partial}f = \rho$ (see \cite{Gauduchon} and \cite[Theorem 2.2]{CTW}).

Define $h_\varepsilon=e^{-\varepsilon f}h$ for $\varepsilon>0$. Because $\sqrt{-1}R^{h_\varepsilon}=\sqrt{-1}R^h+\varepsilon\sqrt{-1}\partial\bar{\partial}f\otimes \Id_E$, we have, for $v\in E$,
\begin{equation}\label{11form}
    \langle \sqrt{-1}R^{h_\varepsilon} v, v\rangle_{h_\varepsilon}=e^{-\varepsilon f}\big( \langle \sqrt{-1}R^{h} v, v\rangle_{h}+|v|^2_h \varepsilon\sqrt{-1}\partial\bar{\partial}f \big).
\end{equation}
We discuss the case $X\setminus B$ first. Taking $\Lambda_{\hat{\omega}}$ on (\ref{11form}) yields
\begin{equation}
    \langle \sqrt{-1}\Lambda_{\hat{\omega}} R^{h_\varepsilon} v, v\rangle_{h_\varepsilon}=e^{-\varepsilon f}\big( \langle \sqrt{-1}\Lambda_{\hat{\omega}} R^{h} v, v\rangle_{h}+|v|^2_h \varepsilon\sqrt{-1}\Lambda_{\hat{\omega}}\partial\bar{\partial}f \big)\geq e^{-\varepsilon f} |v|^2_h \varepsilon \rho=e^{-\varepsilon f} |v|^2_h \varepsilon, 
\end{equation}
where the inequality is due to the fact $\sqrt{-1}\Lambda_{\hat{\omega}} R^h \geq 0$ and $\Delta_{\hat{\omega}} f=\rho$, and the last equality is due to $\rho=1$ on $X\setminus B$. Therefore, for any $v\neq 0$, $\langle \sqrt{-1}\Lambda_{\hat{\omega}} R^{h_\varepsilon} v, v\rangle_{h_\varepsilon}>0$, so $h_\varepsilon$ is RC-positive over $X\setminus B$ for any $\varepsilon >0$.

It remains to prove RC-positivity on $\overline{B}$. Let $C=\sup_X|\sqrt{-1}\partial\bar{\partial}f|_{\hat{\omega}}$. For $v\in E|_{\overline{B}}$ and $|v|_h=1$, we obtain from (\ref{11form}) the following estimate
\begin{equation}
    \lambda_{\max,{\hat{\omega}}}(\langle  R^{h_\varepsilon} v, v\rangle_{h_\varepsilon})\geq e^{-\varepsilon f}\big[  \lambda_{\max,{\hat{\omega}}}(\langle  R^{h} v, v\rangle_{h})-\varepsilon C    \big]\geq e^{-\varepsilon f }(\delta-\varepsilon C). 
\end{equation}
Because $\delta$ and $C$ are independent of $\varepsilon$, we can make $\varepsilon$ small so that $(\delta-\varepsilon C)$ becomes positive. Therefore, for any nonzero $v\in E|_{\overline{B}}$,  the $(1,1)$-form $\langle  R^{h_\varepsilon} v, v\rangle_{h_\varepsilon}$ has at least one positive eigenvalue, so $h_\varepsilon$ is RC-positive on $\overline{B}$. Together with the previous case, we have proved that $h_\varepsilon$ is RC-positive on $X$.
\end{proof}

Next, we consider the special case where $E$ is the holomorphic tangent bundle $T^{1,0}X$ of $X$.  Denote the real tangent bundle by $TX$ and the complex structure by $J$. We have the eigenspace decomposition $TX\otimes \mathbb{C}=T^{1,0}X\oplus T^{0,1}X$ and the identification $\Phi: TX \to  T^{1,0}X$ given by
\begin{equation}
 U\mapsto \Phi(U):=\frac{1}{2}(U-\sqrt{-1}JU).   
\end{equation}

Let $h$ be a Hermitian metric on $T^{1,0}X$. Then $\Phi^*h$ is a Hermitian metric on $(TX,J)$ and induces a $J$-invariant  Riemannian metric $g$ on $TX$ by setting $\langle U,V\rangle_g:=2 \Re{\langle U,V\rangle_{\Phi^* h}}$  for $U,V\in TX$.

If we furthermore assume $h$ is K\"ahler, then the Levi-Civita connection of  $g$ coincides with the Chern connection of $\Phi^*h$ (for example, \cite[Proposition 11.8]{Lecturesonkahler}). We denote the Riemannian curvature operator of $g$ by 
$\mathcal{R}:\wedge^2TX\longrightarrow\wedge^2TX$, so 
\begin{equation}
    \langle R_g(S,T)U,V\rangle_g=\langle  \mathcal{R}(S\wedge T), U\wedge V\rangle_g,
\end{equation}
where $S,T,U,V\in TX$ and  $R_g$ is the Riemannian curvature of $g$. Let $\flat: \wedge ^2TX\to \wedge ^2T^*X$ be the canonical isomorphism induced by $g$. 
\begin{lemma}\label{relation}
\begin{equation}
\langle R^hv,v \rangle_h= \frac{\sqrt{-1}}{2}(\mathcal{R}(V\wedge JV))^\flat, \text{ with } v=\Phi(V) \text{ and } V\in TX.    
\end{equation}    
\end{lemma}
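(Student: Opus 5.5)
The identity is pointwise and real-bilinear in the two form slots, so I will evaluate both sides on a pair of real tangent vectors $S,T\in T_xX$ and compare. The plan is to transport $R^h$ to $R_g$ through $\Phi$, using that the Levi-Civita connection of $g$ coincides with the Chern connection of $\Phi^*h$.

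First I express the left side through $\Phi$. Since $\Phi$ is a bundle isomorphism intertwining the two connections, it intertwines the curvatures: $R^h(S,T)\Phi(V)=\Phi(R_g(S,T)V)$, where $R^h$ is extended complex-bilinearly to real vectors. Hence
\[
\langle R^hv,v\rangle_h(S,T)=\langle R_g(S,T)V,V\rangle_{\Phi^*h}.
\]
Next I convert the Hermitian product into $g$. Because $\Phi^*h$ is Hermitian for $J$ with $\langle U,W\rangle_g=2\Re\langle U,W\rangle_{\Phi^*h}$, one has $\langle U,W\rangle_{\Phi^*h}=\tfrac12\big(g(U,W)+\sqrt{-1}\,g(U,JW)\big)$. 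The sign of the imaginary part follows from sesquilinearity, and I will fix the convention by checking $\langle U,JU\rangle_{\Phi^*h}=-\sqrt{-1}|U|^2$ or its conjugate, depending on the linearity convention. Since $R_g(S,T)$ is skew-adjoint, $g(R_g(S,T)V,V)=0$. So only the imaginary part survives:
\[
\langle R^hv,v\rangle_h(S,T)=\pm\frac{\sqrt{-1}}{2}\,g(R_g(S,T)V,JV)=\pm\frac{\sqrt{-1}}{2}\,\langle\mathcal{R}(S\wedge T),V\wedge JV\rangle_g.
\]

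Finally I use the symmetry of the curvature operator $\mathcal{R}$, which holds by pair symmetry of the Riemann tensor. This gives $\langle\mathcal{R}(S\wedge T),V\wedge JV\rangle_g=\langle\mathcal{R}(V\wedge JV),S\wedge T\rangle_g$. With the usual pairing, the latter equals $(\mathcal{R}(V\wedge JV))^\flat(S,T)$. This yields the claimed identity as $2$-forms, up to the sign settled in the previous step.

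The main obstacle is bookkeeping of conventions rather than any new idea. The points to pin down are: which slot of $h$ is conjugate-linear, how $\langle\cdot,\cdot\rangle_g$ is normalized on $\wedge^2$ so that $\flat$ evaluates with no factor, and the sign convention of $R_g$ relative to $R^h=\nabla^2$. I would fix these by testing on a simple case, for example a Kähler surface with a known curvature sign such as $\mathbb{P}^1$ with the Fubini–Study metric and $S=V$, $T=JV$. Both sides must then give a positive multiple of $|V|^4$ times the sectional curvature, after applying $\sqrt{-1}$ appropriately. That check determines the sign, and it comes out as stated.
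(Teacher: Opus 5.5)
Your argument is correct and is essentially the paper's proof: evaluate on real $S,T$, transfer $R^h$ to $R_g$ through $\Phi$ using that the Chern and Levi-Civita connections coincide, rewrite $\langle\cdot,\cdot\rangle_{\Phi^*h}$ in terms of $g$, kill the real part by skew-symmetry of $R_g(S,T)$, and identify what remains with $(\mathcal{R}(V\wedge JV))^\flat(S,T)$ via the pair symmetry of $\mathcal{R}$. The differences are cosmetic. Putting $J$ on the second slot spares you the explicit appeal to $JR_g(S,T)=R_g(S,T)J$ that the paper makes. Your sign hedge and the test case are unnecessary: with the paper's convention ($h$ linear in the first slot), your formula $\langle U,W\rangle_{\Phi^*h}=\frac{1}{2}\big(g(U,W)+\sqrt{-1}\,g(U,JW)\big)$ coincides with the paper's and already gives the $+$ sign.
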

\begin{proof}
Because $g= 2\Re \Phi^* h$, we have $\langle S,T\rangle_{\Phi^*h}=  [\langle S,T\rangle_g-\sqrt{-1}\langle JS,T\rangle_g]/2$ for $S,T\in TX$. Therefore, for $S$ and $T\in TX$, 
\begin{equation}\label{1}
\begin{aligned}    
 &\langle R^hv,v \rangle_h(S,T)=\langle R^{\Phi^*h} V,V \rangle_{\Phi^*h} (S,T)=\langle R_{g} V,V \rangle_{\Phi^*h} (S,T)=\langle R_{g}(S,T) V,V \rangle_{\Phi^*h} \\
 =&\frac12 [\langle R_{g}(S,T) V,V \rangle_{g} -\sqrt{-1}\langle R_{g}(S,T) JV,V\rangle_g]
 =-\frac12\sqrt{-1}\langle R_{g}(S,T) JV,V\rangle_g ,
\end{aligned}
\end{equation}
where in the fourth equality we use $J(R_g(S,T)V)=R_g(S,T)JV$ by \cite[(2.42)]{besse1987einstein} and in the last equality we use the fact $\langle R_{g}(S,T) V,V \rangle_{g}$ is skew-symmetric in the last two components.

On the other hand, 
\begin{equation}\label{2}
    (\mathcal{R}(V\wedge JV))^\flat(S,T)=\langle S\wedge T, \mathcal{R}(V\wedge JV) \rangle_g=\langle R_g(S,T)V,JV\rangle_g.
\end{equation}
Comparing (\ref{1}) and (\ref{2}), we obtain     $\langle R^hv,v \rangle_h=\sqrt{-1}(\mathcal{R}(V\wedge JV))^\flat/2$.
\end{proof}

\begin{lemma}\label{RC criterion}
    If $X$ is a complex manifold with a K\"ahler metric $h$ whose Ricci curvature is nonnegative, then $h$ is RC-positive at $x$ if and only if $\mathcal{R}(V\wedge JV)$ is nonzero  for any nonzero $V\in T_xX$.
\end{lemma}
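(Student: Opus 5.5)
We plan to combine Lemma \ref{RC criterion'} with Lemma \ref{relation}, taking $E=T^{1,0}X$ and $\omega=h$ itself as the auxiliary Hermitian metric on $X$.

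First, we check the hypothesis of Lemma \ref{RC criterion'}. For a K\"ahler metric $h$ on $T^{1,0}X$, the mean curvature $\sqrt{-1}\Lambda_h R^h$ is the second Chern--Ricci curvature. In the K\"ahler case it coincides, after index raising, with the Ricci form. Concretely, $\sum_i R_{i\bar i\alpha\bar\beta}=\sum_i R_{\alpha\bar\beta i\bar i}$ in an $h$-unitary frame, by the K\"ahler symmetry of the curvature tensor. This expression is the Ricci tensor, which equals the Riemannian Ricci curvature of $g$ under $\Phi$. Nonnegative Ricci curvature therefore gives $\sqrt{-1}\Lambda_h R^h\ge 0$. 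Lemma \ref{RC criterion'} then says that $h$ is RC-positive at $x$ if and only if the $(1,1)$-form $\langle R^h v,v\rangle_h$ is nonzero for every nonzero $v\in T^{1,0}_xX$.

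Second, we translate this condition into the Riemannian setting. Every $v\in T^{1,0}_xX$ can be written as $v=\Phi(V)$ for a unique $V\in T_xX$, and $v\neq 0$ exactly when $V\neq0$. By Lemma \ref{relation}, $\langle R^hv,v\rangle_h=\frac{\sqrt{-1}}{2}(\mathcal{R}(V\wedge JV))^\flat$. The map $\flat$ is an isomorphism induced by $g$, and complexification is injective, so this form vanishes if and only if $\mathcal{R}(V\wedge JV)=0$. Combining the two steps gives the equivalence asserted in Lemma \ref{RC criterion}.

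The only point needing care is the first step, namely the identification of the mean curvature with the Ricci tensor. It relies on the K\"ahler symmetries $R_{i\bar j\alpha\bar\beta}=R_{\alpha\bar j i\bar\beta}$, and one must check that no sign or factor convention flips the inequality. If needed, we would instead verify it directly with Lemma \ref{relation}: trace $\frac{\sqrt{-1}}{2}(\mathcal{R}(V\wedge JV))^\flat$ against $\omega$ and use the Bianchi identity to obtain $\mathrm{Ric}(V,V)\geq 0$.
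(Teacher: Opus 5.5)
Your proposal is correct and matches the paper's proof: take $\omega$ to be the K\"ahler form of $h$, identify the mean curvature $\sqrt{-1}\Lambda_\omega R^h$ with the Ricci curvature so that Lemma \ref{RC criterion'} applies, and then use Lemma \ref{relation} to turn nonvanishing of $\langle R^h v,v\rangle_h$ into nonvanishing of $\mathcal{R}(V\wedge JV)$. You additionally justify the Ricci identification via the K\"ahler symmetry $\sum_i R_{i\bar i\alpha\bar\beta}=\sum_i R_{\alpha\bar\beta i\bar i}$, a step the paper simply asserts.
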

\begin{proof}
This lemma is a special case of Lemma \ref{RC criterion'}. We have the holomorphic tangent bundle $T^{1,0}X$ with the K\"ahler metric $h$ and we denote by $\omega$ the fundamental form associated with $h$. Since $h$ is K\"ahler, the mean curvature or the second Chern--Ricci curvature $\sqrt{-1}\Lambda_\omega R^h$ is the Ricci curvature. So, by Lemma \ref{RC criterion'},  $h$ is RC-positive at $x$ if and only if $\langle R^h v,v\rangle_h$ is nonzero for any nonzero $v\in T^{1,0}_xX$. Using Lemma \ref{relation}, the last statement is equivalent to that $\mathcal{R}(V\wedge JV)$ is nonzero  for any nonzero $V\in T_xX$.
\end{proof}

We will use Lemma  \ref{RC criterion} for complex surfaces. But let us first recall some basics in four dimension geometry.

\subsection{Four dimension geometry and the Weyl operator}
Let $(M^4,g)$ be an oriented Riemannian four-manifold. Let $*$ be the Hodge star operator. On two-forms in the four-manifold, one has $*:\wedge ^2T^*M\longrightarrow\wedge^2T^*M$ and  $*^2=\Id$. Thus $\wedge^2 T^*M$ splits into the eigenspaces of $*$:
\begin{equation}\label{eq:Hodge-splitting}
\wedge ^2 T^*M=\wedge^+ T^*M\oplus\wedge ^- T^*M, \text{ where }
\wedge^\pm T^*M=\{\eta\in \wedge^2 T^*M:* \eta=\pm\eta\}.
\end{equation}
Each of $\wedge^+ T^*M$ and $\wedge^- T^*M$ has real rank three. Their sections are
called self-dual and anti-self-dual two-forms, respectively.

By \cite[(1.116)]{besse1987einstein}, the Riemannian curvature tensor $R_g$ of $g$, as an element in $S^2(\wedge^2T^*M)$, has the decomposition  
\begin{equation}\label{curvature decomp}
    R_g=\frac{s}{24}g\owedge g+ \frac{1}{2}(r-\frac{s}{4}g)\owedge g +W,
\end{equation}
where $s$ is the scalar curvature, $r$ is the Ricci tensor, and $W$ is the Weyl tensor. The notation $\owedge$ is the Kulkarni--Nomizu product on symmetric 2-tensors (for the precise definition, see \cite[Definition 1.110]{besse1987einstein}).

The Weyl tensor $W\in S^2(\wedge^2T^*M)$
can be viewed  as a symmetric operator from $\wedge^2 T^*M$ to $\wedge^2 T^*M$, and with respect to the eigenspace decomposition (\ref{eq:Hodge-splitting}), $W$ is block diagonal with $W^+$ and $W^-$ on the diagonal,  where $W^+$ is a traceless symmetric linear map from $\wedge^+ T^*M $ to $ \wedge^+ T^*M$ and $W^-$ a traceless symmetric linear map from $\wedge^- T^*M $ to $ \wedge^- T^*M$. $W^+$ and $W^-$ are called the self-dual and anti-self-dual parts of $W$, respectively (\cite[(1.127) and (1.128)]{besse1987einstein}). We will write $W=W^+\oplus W^-$ for brevity. We make three observations:
\begin{enumerate}
    \item If the Riemannian metric $g$ is Ricci-flat, then the decomposition (\ref{curvature decomp}) is reduced to $R_g=W=W^+\oplus W^-$. 
    
    \item If $(M,g)$ is a K\"ahler manifold of real dimension four, oriented in the natural way, then according to \cite[Proposition 16.62]{besse1987einstein}, $W^+=0$ provided that the scalar curvature $s=0$.

    \item Combing the above two remarks, if $M$ is a complex surface with a Ricci-flat K\"ahler metric $g$ and with the natural orientation, then $R_g=0\oplus W^-$.
\end{enumerate}

The following lemma is perhaps known to the experts, but we still write out the detials for completeness.
\begin{lemma}\label{surjection}
   Let $M$ be a complex surface with a Hermitian metric $\omega$. For $x\in M$, let  $(\wedge^{1,1}_0)_x$ be the space of the real $(1,1)$-forms at $x$ that are orthogonal to $\omega$.  The map $\mathcal{H}:T_x^*M\to (\wedge^{1,1}_0)_x$ defined by
   \begin{equation}\label{the map}
       \xi \mapsto (\xi\wedge J\xi-\frac{|\xi|^2}{2}\omega  )
\end{equation} is surjective. Moreover, if $\xi$ is nonzero, then $\xi\wedge J\xi -|\xi|^2\omega/2$ is nonzero.
\end{lemma}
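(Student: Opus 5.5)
The statement is pointwise, so I would fix $x\in M$ and choose a unitary coframe $\theta^1,\theta^2$ of $(T^{1,0}_x)^*$. In this coframe $\omega=\frac{\sqrt{-1}}{2}\sum_i\theta^i\wedge\bar\theta^i$; the constant will be pinned down with whatever convention makes $|\xi|^2$ match. A real covector $\xi$ corresponds to $\alpha=\xi^{1,0}=a_1\theta^1+a_2\theta^2$, with $\xi=\alpha+\bar\alpha$, and $J\xi$ is, up to a sign convention, $\sqrt{-1}(\alpha-\bar\alpha)$ or its negative. Then $\xi\wedge J\xi=c\sqrt{-1}\,\alpha\wedge\bar\alpha$ for a fixed nonzero real constant $c$. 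So $\xi\wedge J\xi$ is, up to a constant, the real $(1,1)$-form whose associated Hermitian matrix is the rank-one matrix $a a^*$ with $a=(a_1,a_2)^T$.

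Next I would identify real $(1,1)$-forms at $x$ with $2\times 2$ Hermitian matrices. Under this identification $\omega$ corresponds to a multiple of $I$, and orthogonality to $\omega$ corresponds to tracelessness. Since $\operatorname{tr}(aa^*)=|a|^2$, which is proportional to $|\xi|^2$, I would fix the normalisation of the constants so that $\mathcal H(\xi)$ is $c'\bigl(aa^*-\tfrac12|a|^2 I\bigr)$. As a consistency check, $\mathcal H(\xi)$ must land in $(\wedge^{1,1}_0)_x$, and this forces the coefficient $\tfrac12$, which confirms the normalisation.

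For surjectivity, take any traceless Hermitian $2\times2$ matrix $A$. It has eigenvalues $\pm\lambda$ with $\lambda\ge0$, and a unit eigenvector $e$ for $+\lambda$. Then $ee^*-\tfrac12 I$ equals $\tfrac12\operatorname{diag}(1,-1)$ in the eigenbasis, so $A=2\lambda\bigl(ee^*-\tfrac12 I\bigr)$. Choosing $a=\sqrt{2\lambda/c'}\,e$, or the appropriate sign or rescaling if $c'<0$, gives $A=c'(aa^*-\tfrac12|a|^2I)$. This works because the map is homogeneous of degree two and $aa^*-\tfrac12|a|^2I$ has eigenvalues $\pm\tfrac12|a|^2$; if $c'$ is negative I would replace $e$ with the eigenvector for $-\lambda$. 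Equivalently, the image of the unit sphere is the full sphere of traceless Hermitian matrices of a fixed norm, which is the Hopf map $S^3\to S^2$, and scaling fills the whole three-dimensional space.

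For the nonvanishing claim, if $\xi\ne0$ then $a\ne0$ and $aa^*-\tfrac12|a|^2I$ has eigenvalues $\pm\tfrac12|a|^2\neq0$. The only real obstacle is bookkeeping the normalisation constants (conventions for $J$ on covectors, $|\xi|^2$, and $\omega$) so that the $\tfrac12$ matches. The structure of the argument does not depend on these constants, since tracelessness of the image pins down the correct coefficient.
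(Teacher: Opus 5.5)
Your proposal is correct and is essentially the paper's proof. The paper writes $\mathcal{H}$ in real coordinates at $x$ and recognizes it as the Hopf map $(\zeta_1,\zeta_2)\mapsto\bigl(|\zeta_1|^2-|\zeta_2|^2,\,2\Re(\zeta_1\bar\zeta_2),\,2\Im(\zeta_1\bar\zeta_2)\bigr)$. In your Hermitian-matrix picture this is, up to constants and with $a_j=\bar\zeta_j/2$, exactly $a\mapsto aa^*-\frac12|a|^2I$, so your eigenvector construction simply proves surjectivity of the Hopf map directly instead of citing it.

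The one step you should not leave to ``consistency'' is that $\mathcal{H}(\xi)$ lies in $(\wedge^{1,1}_0)_x$ at all. This is part of the claim, and it depends on the sign convention for $J$ on covectors: with $J\xi=\xi\circ J$ the trace would not cancel. Check it as the paper does, via $\langle\xi\wedge J\xi,\omega\rangle=|\xi|^2$ and $\langle\omega,\omega\rangle=2$. Concretely, with $Jdx_j=dy_j$, $\omega=\sqrt{-1}\sum_j dz_j\wedge d\bar z_j$ and $\xi^{1,0}=\sum_j a_j\,dz_j$, one has $\xi\wedge J\xi=2\sqrt{-1}\,\xi^{1,0}\wedge\overline{\xi^{1,0}}$ and $|\xi|^2=2|a|^2$. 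Hence $\mathcal{H}(\xi)$ has matrix $2\bigl(aa^*-\frac12|a|^2I\bigr)$, which is traceless as required.
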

\begin{proof}
First, the real 2-form $\xi\wedge J\xi$ is $J$-invariant, so it is a real $(1,1)$-form. Thus, $(\xi\wedge J\xi-|\xi|^2\omega/2  )$ is a real $(1,1)$-form. To see that $(\xi\wedge J\xi-|\xi|^2\omega/2  )$ is orthogonal to $\omega$, one notice that $\langle\omega,\omega\rangle=n=2$ and $\langle\xi\wedge J\xi, \omega\rangle=\omega(\xi^\#,J\xi^\#)=g(J\xi^\#, J\xi^\#)=|\xi^\#|^2=|\xi|^2 $, where $\#:T^*M\to TM$ is the canonical map induced by the metric $g$. Hence, $(\xi\wedge J\xi-|\xi|^2\omega/2  )$ is indeed in $(\wedge^{1,1}_0)_x$.

One way to prove that $\mathcal{H}$ is surjective is through local coordinates, and we will see that the map $\mathcal{H}$ is basically the Hopf map $S^3\to S^2$. 

Let the local coordinates around $x$ be $\{z_1=x_1+\sqrt{-1}y_1, z_2=x_2+\sqrt{-1}y_2\}$ such that $Jdx_j=dy_j$, $Jdy_j=-dx_j$. At $x$, we assume $\omega=2(dx_1\wedge dy_1+dx_2\wedge dy_2)$, so $\{dx_1,dy_1,dx_2,dy_2\}$ is an orthogonal basis of $T_x^*M$ and each vector has length $1/\sqrt{2}$. Assume $\xi=adx_1+bdy_1+cdx_2+ddy_2$ with $a,b,c,d\in \mathbb{R}$. A direct computation gives 
\begin{equation}\label{local}
\begin{aligned}
    \xi\wedge J\xi-\frac{|\xi|^2}{2}\omega=&\frac{a^2+b^2-c^2-d^2}{2}(dx_1\wedge dy_1-dx_2\wedge dy_2)\\+&(ac+bd)(dx_1\wedge dy_2-dy_1\wedge dx_2)\\+&(bc-ad)(dx_1\wedge dx_2+dy_1\wedge dy_2).
    \end{aligned}
\end{equation}
Meanwhile, by \cite[(2.3)]{Itoh}, 
$$
\begin{cases}
    e_1=:(dx_1\wedge dy_1-dx_2\wedge dy_2),\\ e_2=:(dx_1\wedge dy_2-dy_1\wedge dx_2),\\
    e_3=:(dx_1\wedge dx_2+dy_1\wedge dy_2)
\end{cases}
$$
is an orthogonal basis for the space $(\wedge^{1,1}_0)_x$ and each vector has length $1/\sqrt{2}$. If we write $\zeta_1=a+\sqrt{-1}b$ and $\zeta_2=c+\sqrt{-1}d$, then (\ref{local}) becomes 
\begin{equation}
    \xi\wedge J\xi-\frac{|\xi|^2}{2}\omega=\frac{(|\zeta_1|^2-|\zeta_2|^2)}{2}e_1+\Re{(\zeta_1\bar{\zeta}_2)}e_2+\Im{(\zeta_1\bar{\zeta}_2)}e_3.
\end{equation}
Therefore, if we  identify $T^*_xM$ with $\mathbb{R}^4$ using the basis $\{dx_1,dy_1,dx_2,dy_2\}$ and $(\wedge^{1,1}_0)_x$ with $\mathbb{R}^3$ using the basis $\{e_1/2, e_2/2, e_3/2\}$, then the map $\mathcal{H}:\mathbb{R}^4\to \mathbb{R}^3$
is \begin{equation}
  (a,b,c,d)\mapsto  \big(|\zeta_1|^2-|\zeta_2|^2, 2\Re{(\zeta_1\bar{\zeta}_2)}, 2\Im{(\zeta_1\bar{\zeta}_2)} \big),
\end{equation}
where $\zeta_1=a+\sqrt{-1}b$ and $\zeta_2=c+\sqrt{-1}d$. When $\mathcal{H}$ is restricted to $S^3\subset \mathbb{R}^4$, we obtain the Hopf map from $S^3$ to $S^2$. In particular, $\mathcal{H}$ maps the sphere of radius $1/\sqrt{2}$ in $T^*_x M$ onto the sphere of radius $1/\sqrt{8}$ in $(\wedge^{1,1}_0)_x$. Because $\mathcal{H}$ satisfies $\mathcal{H}(\lambda\xi)=\lambda^2\mathcal{H}(\xi)$ for any $\lambda>0$ and $\xi \in T^*_x M$, we get that $\mathcal{H}:T^*_x M\to (\wedge^{1,1}_0)_x$ is surjective.

Finally, for $\xi\neq 0$, $\mathcal{H}(\frac{\xi}{\sqrt{2}|\xi|})$ is on the sphere of radius $1/\sqrt{8}$, so it is nonzero. But $$\mathcal{H}(\frac{\xi}{\sqrt{2}|\xi|})=\frac{\mathcal{H}(\xi)}{2|\xi|^2},$$ so $\mathcal{H}(\xi)=\xi\wedge J\xi -|\xi|^2\omega/2$ is nonzero.
\end{proof}

Note that $\dim_\mathbb{C} M=2$ is essential in Lemma \ref{surjection}. In fact, for a complex manifold $M$ of dimension $n$, we can similarly define the map $\mathcal{H}$ 
\begin{equation}
    T_x^*M   \ni \xi \mapsto (\xi\wedge J\xi-\frac{|\xi|^2}{n}\omega  )\in (\wedge^{1,1}_0)_x.
   \end{equation}
But this map cannot be surjective for $n\geq 3$. This is because its differential $d_\xi \mathcal{H}$ has rank at most $\dim_\mathbb{R}T_x^*M=2n$, and $2n<n^2-1=\dim_\mathbb{R}(\wedge^{1,1}_0)_x$ for $n\geq 3$, hence the entire $T_x^*M$ is the critical set and $\mathcal{H}(T_x^*M)$ has measure zero in $(\wedge^{1,1}_0)_x$ by Sard's lemma.

We recall the following lemma from \cite[Lemma 2.1]{Itoh} and \cite[Page 429]{AHS}). 
\begin{lemma}\label{Itoh}
For a complex surface $M$ with a Hermitian metric $\omega$, the following hold for each $x\in M$
\begin{enumerate}
    \item A real 2-form $\alpha$ is self-dual if and only if $\alpha=\beta+\bar{\beta}+b\omega$ for a $(2,0)$-form $\beta$ and a real number $b$.
    \item A real 2-form $\alpha$ is anti-self-dual if and only if $\alpha$ is a real $(1,1)$-form orthogonal to $\omega$. That is, $(\wedge^- T^*M)_x=(\wedge^{1,1}_0)_x$ for any $x\in M$.
\end{enumerate}
\end{lemma}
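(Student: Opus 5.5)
We prove the two items of Lemma \ref{Itoh} by direct linear algebra at the point $x$, using the adapted coordinates already introduced in the proof of Lemma \ref{surjection}. Choose local holomorphic coordinates $z_j=x_j+\sqrt{-1}y_j$ such that at $x$ we have $\omega=2(dx_1\wedge dy_1+dx_2\wedge dy_2)$. Then $\{dx_1,dy_1,dx_2,dy_2\}$ is an orthogonal basis of $T^*_xM$ with vectors of equal length, and it is oriented by the natural complex orientation $dx_1\wedge dy_1\wedge dx_2\wedge dy_2$. Both items are pointwise linear-algebraic statements. Moreover, the Hodge star on $2$-forms in dimension four is conformally invariant, so the common factor in the lengths does not matter. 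We may therefore compute $*$ on the standard basis of $\wedge^2 T^*_xM$ as if the coframe were orthonormal.

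\textbf{Step 1: the Hodge star on the basis.} For an oriented orthonormal coframe $(e^1,e^2,e^3,e^4)=(dx_1,dy_1,dx_2,dy_2)$ we have $*(e^1\wedge e^2)=e^3\wedge e^4$, $*(e^1\wedge e^3)=-e^2\wedge e^4$ and $*(e^1\wedge e^4)=e^2\wedge e^3$. From these, $\wedge^+_x$ is spanned by
\[
dx_1\wedge dy_1+dx_2\wedge dy_2,\qquad dx_1\wedge dx_2-dy_1\wedge dy_2,\qquad dx_1\wedge dy_2+dy_1\wedge dx_2 .
\]
Likewise $\wedge^-_x$ is spanned by $e_1,e_2,e_3$ from the proof of Lemma \ref{surjection}, namely
\[
dx_1\wedge dy_1-dx_2\wedge dy_2,\qquad dx_1\wedge dy_2-dy_1\wedge dx_2,\qquad dx_1\wedge dx_2+dy_1\wedge dy_2 .
\]

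\textbf{Step 2: identify these spans with types.} On the self-dual side, the first form is $\omega/2$. For the others, compute
\[
dz_1\wedge dz_2=(dx_1\wedge dx_2-dy_1\wedge dy_2)+\sqrt{-1}\,(dx_1\wedge dy_2+dy_1\wedge dx_2).
\]
So the last two self-dual basis forms are $\Re(dz_1\wedge dz_2)$ and $\Im(dz_1\wedge dz_2)$. Every real form of the shape $\beta+\bar\beta$ with $\beta=c\,dz_1\wedge dz_2$ is a real combination of these two, and conversely. This gives item (1): $\wedge^+_x=\{\beta+\bar\beta+b\omega\}$.

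On the anti-self-dual side, each $e_i$ is $J$-invariant, so it is a real $(1,1)$-form. Each $e_i$ is also orthogonal to $\omega$; this was already noted in the proof of Lemma \ref{surjection}, and for $e_1$ it is immediate. Hence $\wedge^-_x\subset(\wedge^{1,1}_0)_x$. Now count real dimensions. The space of real $(1,1)$-forms has dimension $n^2=4$, so $(\wedge^{1,1}_0)_x$ has dimension $3$, which equals $\dim\wedge^-_x$. This gives item (2).

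\textbf{Main obstacle.} The only delicate point is bookkeeping of signs and orientation in Step 1. The claim that $\omega$ is self-dual, rather than anti-self-dual, depends on using the complex orientation $\omega^2>0$. We would check this first via $\omega\wedge\omega=2|\omega|^2\,\mathrm{vol}$, which forces $*\omega=\omega$. An alternative cross-check is $\alpha\wedge\alpha=\pm|\alpha|^2\mathrm{vol}$ for $\alpha\in\wedge^\pm$, applied to each $e_i$.
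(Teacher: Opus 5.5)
Your proof is correct. Note that the paper gives no proof of this lemma: it quotes Itoh's Lemma 2.1 and Atiyah--Hitchin--Singer. So your argument is a self-contained replacement rather than a variant of an argument in the paper.

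The checks all hold.
\begin{itemize}
\item At $x$ one may take a unitary frame so that $\omega=2(dx_1\wedge dy_1+dx_2\wedge dy_2)$.
\item The rescaled coframe $\sqrt2\,(dx_1,dy_1,dx_2,dy_2)$ is oriented orthonormal for the complex orientation. In any case $*$ on $2$-forms is unaffected by the constant rescaling.
\item Your three Hodge-star values are right.
\item The self-dual span is that of $\omega$, $\Re(dz_1\wedge dz_2)$ and $\Im(dz_1\wedge dz_2)$, which is exactly $\{\beta+\bar\beta+b\omega\}$.
\item The inclusion $\wedge^-_x\subset(\wedge^{1,1}_0)_x$ together with the dimension count $4-1=3$ gives item (2).
\end{itemize}

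Compared with the citation, your route works in the same adapted coordinates the paper already uses in Lemma~\ref{surjection}. It also makes the orientation convention explicit, which is where the lemma could silently fail. With the opposite orientation, $\omega$ would be anti-self-dual, and the roles of $W^\pm$ in the paper's remark $R_g=0\oplus W^-$ would swap. The citation is shorter but leaves this convention implicit.

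Two minor corrections, neither affecting your main argument since Step 1 already settles the sign.
\begin{itemize}
\item The orthogonality of $e_2,e_3$ to $\omega$ is not actually verified in the proof of Lemma~\ref{surjection}; the paper imports it from Itoh's (2.3). It is immediate, though: $\{e^i\wedge e^j\}_{i<j}$ is an orthogonal basis of $\wedge^2_x$, $e_2,e_3$ involve only $e^1\wedge e^3$, $e^2\wedge e^4$, $e^1\wedge e^4$, $e^2\wedge e^3$, and $\omega$ involves only $e^1\wedge e^2$ and $e^3\wedge e^4$.
\item Your cross-check identity should read $\omega\wedge\omega=|\omega|^2\,\mathrm{vol}$, which is $2\,\mathrm{vol}$ with the paper's normalization $|\omega|^2=2$. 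The reason is $\omega\wedge\omega=\langle\omega,*\omega\rangle\,\mathrm{vol}\le|\omega|^2\,\mathrm{vol}$, with equality exactly when $*\omega=\omega$. A factor $2|\omega|^2$ would contradict Cauchy--Schwarz.
\end{itemize}
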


Using Lemma \ref{RC criterion}, Lemma \ref{surjection}, and Lemma \ref{Itoh}, we deduce the following characterization of RC-positivity in terms of the invertibility of $W^-$.
\begin{lemma}\label{RC criterion 2}
    If $M$ is a complex surface with a Ricci-flat K\"ahler metric $g$, then   $g$ is RC-positive at $x$ if and only if the anti-self-dual part of the Weyl operator $$W^-:(\wedge^-T^*M)_x\longrightarrow (\wedge^-T^*M)_x$$ is invertible.
\end{lemma}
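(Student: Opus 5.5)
Since $g$ is Ricci-flat, its Ricci curvature is nonnegative, so Lemma \ref{RC criterion} applies: $g$ is RC-positive at $x$ if and only if $\mathcal{R}(V\wedge JV)\neq 0$ for every nonzero $V\in T_xM$. The plan is to rewrite this condition in terms of $W^-$, using the decomposition $R_g=0\oplus W^-$ from the third observation above together with Lemmas \ref{surjection} and \ref{Itoh}.

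First I pass from vectors to covectors via $\flat$. Set $\xi=V^\flat\in T^*_xM$, so that $(V\wedge JV)^\flat=\pm\,\xi\wedge J\xi$; the sign depends on the convention for $J$ on covectors and is irrelevant for vanishing. Viewing the curvature operator on two-forms, the condition becomes $\mathcal{R}(\xi\wedge J\xi)\neq 0$ for all nonzero $\xi$. Next I decompose
\[
\xi\wedge J\xi=\frac{|\xi|^2}{2}\,\omega+\mathcal{H}(\xi),
\]
where $\mathcal{H}(\xi)\in(\wedge^{1,1}_0)_x=(\wedge^-T^*M)_x$ by Lemma \ref{Itoh}(2). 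The form $\omega$ is self-dual by Lemma \ref{Itoh}(1) with $\beta=0$. Since $\mathcal{R}=R_g=0\oplus W^-$, the self-dual component is killed, so
\[
\mathcal{R}(\xi\wedge J\xi)=W^-(\mathcal{H}(\xi)).
\]
Thus $g$ is RC-positive at $x$ if and only if $W^-(\mathcal{H}(\xi))\neq 0$ for all $\xi\neq 0$.

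If $W^-$ is invertible, the last part of Lemma \ref{surjection} gives $\mathcal{H}(\xi)\neq 0$ for $\xi\neq 0$, hence $W^-(\mathcal{H}(\xi))\neq 0$, and $g$ is RC-positive at $x$. Conversely, suppose $W^-$ is not invertible, and pick a nonzero $\eta\in(\wedge^-T^*M)_x$ with $W^-\eta=0$. By surjectivity in Lemma \ref{surjection}, there is $\xi$ with $\mathcal{H}(\xi)=\eta$, and $\xi\neq 0$ because $\mathcal{H}(0)=0$. Then $\mathcal{R}(\xi\wedge J\xi)=0$, so RC-positivity fails at $x$.

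The main obstacle is bookkeeping rather than ideas. The identification of $\mathcal{R}$ on $\wedge^2TX$ with $R_g$ as an operator on $\wedge^2T^*M$ (via $\flat$) must be checked to be compatible with the splitting $0\oplus W^-$, and the orientation must match the one used in the third observation, so that $\omega$ is really self-dual. Normalization constants and signs in $\flat$ do not matter, since only nonvanishing is used.
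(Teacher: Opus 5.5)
Your proposal is correct and follows the paper's proof essentially step for step. You reduce via Lemma \ref{RC criterion} to nonvanishing of $\mathcal{R}(\xi\wedge J\xi)$, split $\xi\wedge J\xi=\frac{|\xi|^2}{2}\omega+\mathcal{H}(\xi)$ into its self-dual and anti-self-dual parts, and use $R_g=0\oplus W^-$ to kill the self-dual part; you then conclude with Lemma \ref{surjection}, spelling out the two directions (nonvanishing of $\mathcal{H}(\xi)$ for one, surjectivity for the other) that the paper compresses into a single sentence.
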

\begin{proof}
By Lemma \ref{RC criterion}, $g$ is RC-positive at $x$ if and only if  $\mathcal{R}(V\wedge JV)$ is nonzero  for any nonzero $V\in T_xM$. We use the canonical isomorphism between $T_xM$ and $T_x^*M$ induced by the metric $g$, so the statement is equivalent to that  $R_g(\xi\wedge J\xi)$ is nonzero  for any nonzero $\xi\in T_x^*M$

For $\xi \in T_x^*M$, the decomposition of the real two-form $\xi\wedge J\xi$ into its self-dual and anti-self-dual parts is 
\begin{equation}
    \xi\wedge J\xi=\frac{|\xi|^2}{2}\omega+(\xi\wedge J\xi -\frac{|\xi|^2}{2}\omega)\in \wedge^+ T^*M\oplus\wedge^- T^*M.
\end{equation}
This is because of Lemma \ref{Itoh}.

By the remarks before Lemma \ref{surjection}, we know $R_g=0 \oplus W^-$.   So $R_g(\xi\wedge J\xi )=W^-(\xi\wedge J\xi -|\xi|^2\omega/2)$. As a result,  $R_g(\xi\wedge J\xi)$ is nonzero  for any nonzero $\xi\in T_x^*M$ if and only if $W^-(\xi\wedge J\xi -|\xi|^2\omega/2)$ is nonzero for any nonzero $\xi\in T_x^*M$. This last statement is also equivalent to $W^-$ being invertible.  This is because $\xi\wedge J\xi -|\xi|^2\omega/2$ fills the entire $(\wedge^{1,1}_0)_x=(\wedge^- T^*M)_x$ by Lemma \ref{surjection}.
\end{proof}

We recall the following Weitzenb\"ock formula. 
This formula is originally due to Derdzi\'nski \cite{Derdzinski}. We use the formulation from \cite[Theorem 1.1]{Weitzenbockformula}. 

\begin{lemma}\label{lem Weitzenbock}
Let $(M,g)$ be an oriented Einstein four-manifold. The self-dual and anti-self-dual parts of the Weyl operator satisfy
\begin{equation}
\Delta |W^\pm|^2
=
2|\nabla W^\pm|^2
+
s|W^\pm|^2
-
36\det W^\pm.
\end{equation}
\end{lemma}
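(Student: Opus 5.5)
The approach is a Bochner argument. On an Einstein four-manifold, $W^\pm$ is a harmonic section of the bundle $S^2_0(\wedge^\pm T^*M)$ of traceless symmetric endomorphisms of $\wedge^\pm T^*M$. The formula then comes from pairing the associated rough-Laplacian identity with $W^\pm$. Throughout, $|W^\pm|^2=\operatorname{tr}((W^\pm)^2)$ is the norm of $W^\pm$ as an operator on $\wedge^\pm T^*M$, and $\Delta$ is the analyst's Laplacian $\operatorname{tr}\nabla^2$ (this is the sign forced by the $+2|\nabla W^\pm|^2$ term).

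\textbf{Step 1: harmonicity of $W^\pm$.} Since $r=\frac{s}{4}g$ with $s$ constant, the Schouten tensor is parallel. The contracted second Bianchi identity then gives $\delta W=0$, since the divergence of $W$ is a multiple of the Cotton tensor, which vanishes here. In dimension four the Hodge star is parallel and commutes with the decomposition, so $\delta W^+=\delta W^-=0$ separately. Now view $W^\pm$ as a $\wedge^\pm$-valued two-form lying in $\wedge^\pm\otimes\wedge^\pm$. The differential Bianchi identity gives $d^\nabla W^\pm=0$; together with the (anti-)self-duality of the form part, this makes $\delta^\nabla W^\pm=0$ equivalent to $d^\nabla W^\pm=0$. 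Hence $W^\pm$ is harmonic for the twisted Hodge Laplacian $d^\nabla\delta^\nabla+\delta^\nabla d^\nabla$.

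\textbf{Step 2: the Weitzenb\"ock identity.} Apply the Weitzenb\"ock formula to bundle-valued two-forms. For a section of $\wedge^\pm\otimes\wedge^\pm$, the curvature term splits into two pieces. One comes from the curvature of the form factor and the other from the curvature of the coefficient bundle $\wedge^\pm$. On an Einstein four-manifold, the curvature acting on $\wedge^\pm$ is $W^\pm+\frac{s}{12}\Id$, and the Ricci part is scalar. So both pieces can be written purely in terms of $s$ and the algebra of $W^\pm$. Using that $W^\pm$ is symmetric and traceless, I expect the outcome to be
\[
\nabla^*\nabla W^\pm=-\frac{s}{2}W^\pm+6\Big((W^\pm)^2-\frac13|W^\pm|^2\Id\Big).
\]
The sign and the constant $6$ are to be fixed by a check on a model case, for instance $\mathbb{CP}^2$ with its Fubini--Study metric, or a K\"ahler surface where $W^+$ has eigenvalues $(\frac{s}{6},-\frac{s}{12},-\frac{s}{12})$.

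\textbf{Step 3: pairing with $W^\pm$.} Use the pointwise identity $\frac12\Delta|W^\pm|^2=|\nabla W^\pm|^2-\langle\nabla^*\nabla W^\pm,W^\pm\rangle$. Since $W^\pm$ is traceless, the $\Id$ term pairs to zero. For a traceless symmetric $3\times 3$ matrix $A$ one has $\operatorname{tr}A^3=3\det A$. This gives $\langle (W^\pm)^2,W^\pm\rangle=3\det W^\pm$, and therefore
\[
\Delta|W^\pm|^2=2|\nabla W^\pm|^2+s|W^\pm|^2-36\det W^\pm .
\]

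\textbf{Main obstacle.} The hard part is the algebra in Step 2. One has to compute the curvature endomorphism of the Weitzenb\"ock formula on $\wedge^\pm\otimes\wedge^\pm$ and identify its quadratic part as exactly $6\big((W^\pm)^2\big)_0$. This is where normalizations are easy to get wrong: the norm on two-forms, factors of $2$ in identifying $\mathfrak{so}(4)$ with $\wedge^2$, and the sign convention for $\Delta$. To pin these down, I would first carry out the computation in the representation-theoretic form $\mathfrak{so}(3)\cong\wedge^\pm$. In that language the quadratic term is the cross product $\wedge^\pm\times\wedge^\pm\to\wedge^\pm$. I would then check every constant against the K\"ahler--Einstein example above.
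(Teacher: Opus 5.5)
The paper gives no proof of this lemma. It quotes the formula from Derdzi\'nski \cite{Derdzinski} in the form of \cite[Theorem 1.1]{Weitzenbockformula}. Your route (harmonicity of $W^\pm$, a twisted Weitzenb\"ock formula on $\wedge^\pm$-valued two-forms, then pairing with $W^\pm$) is a standard self-contained derivation, and Steps 1 and 3 are fine.

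\begin{itemize}
\item $d^\nabla W^\pm=0$ follows from $d^\nabla R=0$. This is because $\frac{s}{12}\mathrm{Id}$ is parallel, and $d^\nabla W^+$, $d^\nabla W^-$ take values in the complementary bundles $\wedge^3\otimes\wedge^+$ and $\wedge^3\otimes\wedge^-$.
\item (Anti-)self-duality of the form slot turns this into $\delta^\nabla W^\pm=0$.
\item With your normalization ($|W^\pm|^2=\mathrm{tr}((W^\pm)^2)$ and the analyst's Laplacian), the pairing together with $\mathrm{tr}A^3=3\det A$ produces exactly the stated constants.
\end{itemize}

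\textbf{The gap is Step 2, which is the entire content of the lemma.} You only announce the identity for $\nabla^*\nabla W^\pm$ as expected, and the calibration you propose cannot establish it.

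Equivariance only tells you that $\nabla^*\nabla W^\pm=\alpha sW^\pm+\beta((W^\pm)^2)_0$ for two unknown constants. (Here $S^2_0(\wedge^\pm)$ is irreducible and occurs once in its own symmetric square.) On the K\"ahler--Einstein model, however, $\nabla W^+=0$ and $((W^+)^2)_0=\frac{s}{12}W^+$. So the check gives only the single relation $12\alpha+\beta=0$. It cannot confirm $\alpha=-\frac12$ and $\beta=6$ separately, and an error in one would be silently absorbed into the other. No parallel model does better, since a parallel $W^\pm\neq0$ on an Einstein manifold always has a repeated eigenvalue.

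So you must compute at least one coefficient directly, and the full computation is in fact short.
\begin{itemize}
\item Let $A=W^\pm+\frac{s}{12}\mathrm{Id}$ be the curvature operator on $\wedge^\pm$. Choose an orthonormal basis $\omega_a$ of $\wedge^\pm$ with $[\omega_a,\omega_b]=\sqrt2\,\epsilon_{abc}\,\omega_c$, where the bracket is the commutator of the associated skew endomorphisms.
\item Write $d^\nabla\delta^\nabla+\delta^\nabla d^\nabla=\nabla^*\nabla+\mathcal{K}$. Then $\mathcal{K}$ is the classical term $\frac{s}{3}-2W^\pm$ acting on the form slot, plus the mixed term $-\sum_b[\omega_b,\cdot\,]\otimes[A\omega_b,\cdot\,]$.
\item On $W^\pm$, the first piece gives $\frac{s}{3}W^\pm-2(W^\pm)^2$.
\item The second piece gives $\frac{s}{6}W^\pm-4\,\mathrm{adj}(W^\pm)$, where $\mathrm{adj}(W^\pm)=(W^\pm)^2-\frac12|W^\pm|^2\mathrm{Id}$ by Cayley--Hamilton.
\end{itemize}
Hence $\mathcal{K}(W^\pm)=\frac{s}{2}W^\pm-6((W^\pm)^2)_0$. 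This is exactly minus your predicted right-hand side, so with this computation inserted your argument is complete.

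Note also that the paper's only use of the lemma is Lemma \ref{proper set}, where $s=0$. There the structural form $\Delta|W^-|^2=2|\nabla W^-|^2-6\beta\det W^-$, which your Steps 1 and 3 plus equivariance already give, suffices for any value of $\beta$.
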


Using Lemma \ref{lem Weitzenbock}, we show that the set where $W^-$ is invertible is nonempty.
\begin{lemma}\label{proper set}
  If $M$ is a compact complex surface with a nonflat Ricci-flat K\"ahler metric $g$, then the set 
\begin{equation}\label{eq:Z}
  Z:=\{x\in M:\det W^-(x)=0\}
\end{equation}
is a proper subset of $M$.
\end{lemma}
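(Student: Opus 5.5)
We argue by contradiction: suppose $Z=M$, so $\det W^-\equiv 0$ on $M$. Since $g$ is Ricci-flat, it is Einstein with scalar curvature $s=0$. With the natural orientation, Lemma \ref{lem Weitzenbock} applied to the anti-self-dual part then reads
\begin{equation*}
\Delta |W^-|^2 = 2|\nabla W^-|^2 - 36\det W^- = 2|\nabla W^-|^2 .
\end{equation*}

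We integrate this identity over the compact manifold $M$. The integral of the Laplacian of the smooth function $|W^-|^2$ vanishes by the divergence theorem, whatever the sign convention for $\Delta$. This gives $\int_M |\nabla W^-|^2\, dV_g = 0$, so $\nabla W^-\equiv 0$. It follows that $|W^-|$ is constant on $M$ (we may assume $M$ connected). Moreover, $W^-$ is a parallel section of $\End(\wedge^-T^*M)$, so its eigenvalues are constant on $M$.

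Next we show $W^-\equiv 0$. At each point $W^-$ is a traceless symmetric endomorphism of the rank-three bundle $\wedge^-T^*M$ with $\det W^-=0$. Its eigenvalues $\lambda_1,\lambda_2,\lambda_3$ therefore satisfy $\lambda_1+\lambda_2+\lambda_3=0$ and $\lambda_1\lambda_2\lambda_3=0$, so they have the form $\{\lambda,-\lambda,0\}$ with $\lambda\ge 0$ constant. We must rule out $\lambda>0$. If $\lambda>0$, the three eigenspaces are parallel line subbundles, so $\wedge^-T^*M$ splits into three parallel real lines, and in particular the $0$-eigenline is spanned by a parallel anti-self-dual two-form. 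Here we use that the curvature of $\wedge^-T^*M$, acting through $\mathfrak{so}(3)$, is given by $R_g=W^-$ restricted to $\wedge^-$. Parallel line bundles force the curvature endomorphisms of $\wedge^-$ to preserve each line. Since $\mathfrak{so}(3)$ acts on $\mathbb{R}^3$ with no nonzero element fixing three orthogonal lines, the curvature of $\wedge^-$ vanishes. Hence $W^-=0$, contradicting $\lambda>0$. A simpler alternative: since $W^-(\xi\wedge\eta)$ is itself the curvature operator on $\wedge^-$ under the isomorphism $\wedge^-\cong\mathfrak{so}(\wedge^-)$, holonomy reducing to a proper subgroup forces curvature in its Lie algebra. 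A trivial or discrete holonomy algebra then gives zero curvature.

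In either case $W^-\equiv 0$. Since $R_g=0\oplus W^-$ by the observations preceding Lemma \ref{surjection}, this means $R_g\equiv 0$, so $g$ is flat, contradicting the hypothesis. Therefore $Z\neq M$.

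The main obstacle is the step from parallel $W^-$ with $\det W^-=0$ to $W^-=0$. The Weitzenböck identity only gives parallelism, so this step needs the holonomy/curvature argument on $\wedge^-T^*M$ described above, and it is where care is required.
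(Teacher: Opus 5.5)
Your proof is correct. Up to $\nabla W^-\equiv 0$ it is the paper's argument: apply the Weitzenb\"ock formula with $s=0$ and $\det W^-\equiv 0$, then integrate.

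The difference is in how flatness is extracted. The paper notes that $R_g=0\oplus W^-$ makes $g$ locally symmetric. It then cites the standard fact that a locally symmetric Ricci-flat metric is flat, which comes from the structure theory of symmetric spaces and does not use $\det W^-=0$ again.

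You instead prove flatness directly:
\begin{enumerate}
\item Tracelessness plus $\det W^-=0$ force the constant spectrum $\{\lambda,-\lambda,0\}$.
\item If $\lambda>0$, the three orthogonal eigenlines are parallel. So every curvature endomorphism of $\wedge^-T^*M$ is a skew map preserving three orthogonal lines, hence zero.
\item Since $\wedge^2=\wedge^+\oplus\wedge^-$ splits $\mathrm{so}(4)$ into commuting ideals, the curvature of $\wedge^-$ at $(X,Y)$ is $\mathrm{ad}$ of the $\wedge^-$-component of $\mathcal{R}(X\wedge Y)$. Here that component is $W^-((X\wedge Y)^-)$.
\item Because $\mathrm{ad}$ is faithful on $\mathrm{so}(3)$, this gives $W^-=0$.
\end{enumerate}

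Your route is self-contained: it needs only linear algebra and the fact that curvature preserves parallel subbundles. It is, however, specific to dimension four and uses the hypothesis $\det W^-=0$ twice. The paper's route is a one-line citation and shows that parallelism of $W^-$ alone already forces flatness here.

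When you write it up, state explicitly that the curvature of $\wedge^-$ equals $\mathrm{ad}\circ W^-$. That is the step you flag as delicate, and it is exactly where the Ricci-flat K\"ahler decomposition $R_g=0\oplus W^-$ is used.
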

\begin{proof}
Using the Weitzenb\"ock formula in Lemma \ref{lem Weitzenbock} for $W^-$ and the fact $s=0$, we obtain
\begin{equation}\label{eq:weyl-ricci-flat}
  \Delta|W^-|^2
  =2|\nabla W^-|^2-36\det W^-.
\end{equation}

If $Z=M$, then $\det W^-=0$ everywhere on $M$. So after integrating
\eqref{eq:weyl-ricci-flat} over the compact manifold $M$ and using Stokes' theorem, we get  
\begin{equation}\label{eq:parallel-wminus}
  0=\int_M|\nabla W^-|^2\,dV_g.
\end{equation}
Hence, $\nabla W^-=0$. Meanwhile, by the remark before Lemma \ref{surjection}, we know that $R_g=0\oplus W^-$, so $\nabla R_g=0$ and $g$ is locally
symmetric. However, a locally symmetric Ricci-flat metric has to be flat, which contradicts the assumption in Lemma \ref{proper set}. Therefore, $Z\ne M$.
\end{proof}

\section{Proofs of the main results}\label{Section 3}

\begin{proof}[Proof of Theorem \ref{main thm}]
    Let $X$ be a compact complex surface with a nonflat Ricci-flat K\"ahler metric $g$. We want to construct an RC-positive Hermitian metric on the holomorphic tangent bundle $T^{1,0}X$. 

Let $W^-$ be the anti-self-dual part of the Weyl operator associated with the metric $g$. From Lemma \ref{proper set}, we know $X \setminus Z$ is a nonempty open set, where  $Z:=\{x\in X:\det W^-(x)=0\}$. By Lemma \ref{RC criterion 2}, $g$ is RC-positive on $X \setminus Z$. By Lemma \ref{lem conformal}, some conformal change of $g$ is RC-positive on $X$, and hence the theorem follows.
\end{proof}

The only part in Corollary \ref{cor} that we have not yet proved is that $S^kT^{1,0}X$ with $k\geq 2$ is not RC-positive where $X$ is a K3 surface. We need a lemma first.
\begin{lemma}\label{lem long} \
\begin{enumerate}
\item    Let $V$ be a complex vector space of dimension $2$ and let $k\geq 2$. Any $u\in S^kV$ can be written as $v_1\odot\cdots \odot v_k$ for $v_1, \ldots, v_k\in V$, where $\odot$ denotes the symmetric product in $S^kV$. For $u\neq 0$, the factorization is unique up to permutation and rescaling.  For $\sigma\in \wedge^2 V^*$,  the map $\Delta^k_\sigma: S^kV \to \mathbb{C}$ given by 
    \begin{equation}\label{Delta}
     \Delta^k_\sigma(u)=\prod_{1\leq i<j \leq k}\sigma(v_i,v_j)^2   
    \end{equation}
  is well-defined. Moreover, $\Delta^k_\sigma$ is a homogeneous polynomial of degree $2(k-1)$ on $S^kV$; that is,  $\Delta^k_\sigma\in S^{2(k-1) }(S^kV)^*$.  Finally, if $\sigma\neq0$, then $\Delta_\sigma^k\neq 0$.

  \item Let $X$ be a complex surface and $\sigma$ be a local holomorphic section of the bundle  $\wedge^2 (T^{1,0}X)^*$. Let $k\geq 2$. The section defined by
    \begin{equation}\label{sigma section}
       x\mapsto  \Delta^k_{\sigma(x)}\in S^{2(k-1)}(S^kT^{1,0}_xX)^*   
    \end{equation}
is holomorphic.
 \end{enumerate} 
  \end{lemma}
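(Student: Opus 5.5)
The plan is to identify $S^kV$ with binary forms of degree $k$ and to express $\Delta^k_\sigma$ through the classical discriminant.

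\textbf{Factorization.} Choose a basis $e_1,e_2$ of $V$, with dual coordinates $x,y$ on $V^*$. Send $u\in S^kV$ to the homogeneous polynomial $P_u(x,y)$ of degree $k$. This is the standard isomorphism $S^kV\cong \mathrm{Sym}^k$ of degree-$k$ polynomials on $V^*$, and $v_1\odot\cdots\odot v_k$ goes to the product of the linear forms $\ell_{v_i}=v_i^1x+v_i^2y$. Over $\mathbb{C}$, any nonzero binary form of degree $k$ factors into $k$ linear forms; this follows by dehomogenizing and applying the fundamental theorem of algebra, where a drop in degree corresponds to factors of $y$. So every $u$ is of the form $v_1\odot\cdots\odot v_k$. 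Since $\mathbb{C}[x,y]$ is a UFD, the factorization is unique up to permutation and scalars $v_i\mapsto c_iv_i$ with $\prod c_i=1$. For $u=0$ we may take $v_1=0$. In every such representation some $v_i=0$, so the product in \eqref{Delta} is $0$ because $k\ge2$.

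\textbf{Well-definedness.} Under $v_i\mapsto c_iv_i$, the product $\prod_{i<j}\sigma(v_i,v_j)^2$ is multiplied by $\prod_{i<j}c_i^2c_j^2=\prod_i c_i^{2(k-1)}=1$. It is also symmetric under permutations, because the factors are squared and $\sigma$ is antisymmetric. Hence $\Delta^k_\sigma$ is well-defined.

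\textbf{Polynomiality.} Write $\sigma=s\,e^1\wedge e^2$, so that $\sigma(v_i,v_j)=s\det(v_i,v_j)$ and
\begin{equation*}
\Delta^k_\sigma(u)=s^{k(k-1)}\prod_{i<j}\det(v_i,v_j)^2 .
\end{equation*}
The quantity $\prod_{i<j}\det(v_i,v_j)^2$ is, up to a universal sign, the discriminant of the binary form $P_u$. The cleanest way to see this is the generic case: when $v_i^2\ne0$, rescale so that $v_i=(r_i,1)$ up to scalar, and the product becomes $a_0^{2k-2}\prod_{i<j}(r_i-r_j)^2$ with $a_0=\prod v_i^2$. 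This is exactly the classical discriminant. It is a polynomial of degree $2k-2$ in the coefficients of $P_u$, by symmetric function theory applied to the roots $r_i$.

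The two sides agree on the dense open set where the $y^k$-coefficient is nonzero. The left side is continuous: it is invariant under the $GL_2$ change of basis up to $\det^{k(k-1)}$, so one can always choose coordinates making a given $u$ generic. Equality then holds everywhere. This continuity and density step is the part needing the most care, and I would handle it by using $GL_2$-equivariance rather than arguing directly.

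\textbf{Nonvanishing and holomorphicity.} If $s\ne0$, evaluate at $u=e_1\odot(e_1+e_2)\odot\cdots\odot(e_1+(k-1)e_2)$, whose factors are pairwise independent. This gives a nonzero value, so $\Delta^k_\sigma\ne0$.

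For part (2), use a local holomorphic frame $\partial/\partial z_1,\partial/\partial z_2$ and write $\sigma=s(z)\,dz_1\wedge dz_2$ with $s$ holomorphic. In the induced holomorphic frame of $S^kT^{1,0}X$, the polynomial $\Delta^k_{\sigma(x)}$ equals $s(z)^{k(k-1)}$ times the discriminant, which has fixed constant coefficients in the monomial coordinates. Its coefficients in the induced frame of $S^{2(k-1)}(S^kT^{1,0}X)^*$ are therefore holomorphic functions, so the section \eqref{sigma section} is holomorphic.
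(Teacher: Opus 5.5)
Your argument is essentially the paper's: binary forms, the fundamental theorem of algebra and unique factorization in $\mathbb{C}[x,y]$, and invariance under rescalings with $\prod c_i=1$. It also identifies $\prod_{i<j}\det(v_i,v_j)^2$ with $\pm$ the classical discriminant on the generic locus, and uses a local holomorphic frame for part (2). Two of your steps are slightly cleaner than the paper's: the nonvanishing check, which evaluates the definition directly at $\bigodot_{m=0}^{k-1}(e_1+me_2)$, and the explicit treatment of $u=0$.

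The one step whose justification does not go through as written is extending the discriminant identity off the locus where the $y^k$-coefficient is nonzero. The $\det^{k(k-1)}$-equivariance of $\prod_{i<j}\det(v_i,v_j)^2$ does not by itself make $u\mapsto\Delta^k_\sigma(u)$ continuous. If instead you redo the generic computation in a basis adapted to a given $u$, carrying the identity back to the original basis needs the matching transformation law for the discriminant \emph{polynomial}, and you never state it.

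This can be repaired. In an adapted basis, $\Delta^k_\sigma$ coincides with a polynomial on a Zariski-open neighbourhood of $u$, so it is continuous away from $0$, and density then finishes.

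The paper's route is simpler. Write $v_j=b_je_1+c_je_2$. Both $\prod_{i<j}(b_ic_j-c_ib_j)^2$ and the discriminant evaluated at the coefficients $a_0,\dots,a_k$ of $\bigodot_j v_j$ are polynomials in $(b,c)\in\mathbb{C}^{2k}$. They agree wherever $c_1\cdots c_k\neq 0$, hence agree identically. Since every $u$ has a factorization, $\Delta^k_\sigma(u)=\pm s^{k(k-1)}\,\mathrm{disc}(P_u)$ on all of $S^kV$. This needs neither continuity of $\Delta^k_\sigma$ nor a change of basis.
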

\begin{proof}
First, we show that any $u\in S^kV$ can be written as $v_1\odot\cdots \odot v_k$ for $v_1, \ldots, v_k\in V$. Because $\dim_\mathbb{C} V=2$, we can write $u\in S^kV$ as a homogeneous polynomial of degree $k$ with two variables $x$ and $y$,  
\begin{equation}\label{homogeneous}
    u=\sum_{j=0}^k a_j x^{k-j} y^j 
\end{equation}
for complex numbers $\{a_j\}_{j=0}^k$. By setting $x=t$ and $y=1$ in (\ref{homogeneous}), we obtain the polynomial $\sum_{j=0}^k a_j t^{k-j} $, which is of degree $m\leq k$, and we can factorize
\begin{equation}
    \sum_{j=0}^k a_j t^{k-j}=c(t-r_1)\cdots (t-r_m), \text{ where } c, r_1, \ldots, r_m\in \mathbb{C}.
\end{equation}
Therefore, 
\begin{equation}
 u=y^k\sum_{j=0}^k a_j \big(\frac{x}{y}\big)^{k-j}=y^kc(\frac{x}{y}-r_1)\cdots (\frac{x}{y}-r_m)=  y^{k-m}c(x-r_1 y)\cdots (x-r_my),  
\end{equation}
which is a product of homogeneous polynomials of degree $1$. Therefore, $u$ can be written as $v_1\odot\cdots \odot v_k$ for $v_1, \ldots, v_k\in V$. 

Suppose $u\neq 0$ has two factorizations $v_1\odot\cdots \odot v_k$ and $w_1\odot\cdots \odot w_k$. Using the view of homogeneous polynomials, we can write $v_j=a_jx+b_jy$ and $w_j=c_jx+d_jy$ with $a_j,b_j,c_j, d_j\in \mathbb{C}$ for each $j$. Because $\mathbb{C}[x,y]$ is a UFD, there exist a permutation $\pi$ of $\{1,\ldots, k\}$ and $\lambda_j\in \mathbb{C}$ for $j=1\sim k$ with $\Pi\lambda_j=1$ such that $v_j=\lambda_j w_{\pi(j)}$ for each $j$. So, the factorization of $u$ is unique up to permutation and rescaling. Moreover, we observe that
\begin{equation}
\begin{aligned}
    &\prod_{1\leq i<j \leq k}\sigma(v_i,v_j)^2=
    \prod_{1\leq i<j \leq k}\sigma(\lambda_i w_{\pi(i)},\lambda_j w_{\pi(j)})^2=
    \prod_{1\leq i<j \leq k}\lambda_i^2\lambda_j^2\sigma( w_{\pi(i)}, w_{\pi(j)})^2\\
    = &(\prod_{j=1}^k\lambda_j)^{2(k-1)} \prod_{1\leq i<j \leq k}\sigma( w_{\pi(i)}, w_{\pi(j)})^2   =\prod_{1\leq i<j \leq k}\sigma(w_i,w_j)^2  
\end{aligned}
\end{equation}
showing that the map $\Delta^k_\sigma: S^kV \to \mathbb{C}$ in (\ref{Delta}) is well-defined.

Next, we are going to show that $\Delta^k_\sigma$ is a homogeneous polynomial of degree $2(k-1)$ on $S^kV$. Let $\{e_0,e_1\}$ be a basis for $V$ and denote $\sigma (e_0,e_1)=s$. For $u\in S^kV $, we write $u=v_1\odot \cdots \odot v_k$ for $v_1,\ldots, v_k\in V$. Let $u=\sum_{j=0}^k a_j e_0^{k-j}e_1^j$ with $a_0,\ldots, a_k\in \mathbb{C}$, and  $v_j=b_je_0+c_je_1$ with $b_j,c_j\in\mathbb{C}$ for $j=1\sim k$. Hence,
\begin{equation}
    \sum_{j=0}^k a_j e_0^{k-j}e_1^j=u=v_1\odot \cdots \odot v_k=\bigodot_{j=1}^k (b_je_0+c_je_1).
\end{equation}
Note that $\sigma(b_ie_0+c_ie_1, b_je_0+c_je_1)=s(b_ic_j-c_ib_j)$, so 
\begin{equation}\label{product}
 \Delta_\sigma^k(u)=\prod_{1\leq i<j\leq k}s^2(b_ic_j-c_ib_j)^2=(s^2)^{\frac{k(k-1)}{2}}\prod_{1\leq i<j\leq k}(b_ic_j-c_ib_j)^2.   
\end{equation}
On the other hand, if we use the view of homogeneous polynomials, then
 $u$ corresponds to 
\begin{equation}
 F(x,y)=\sum_{j=0}^k a_j y^{k-j}x^j=\prod_{j=1}^k (b_jy+c_jx).  
\end{equation} 
The discriminant of $F$ is
\begin{equation}\label{disc'}
 D(F(x,y))=(-1)^{\frac{k(k-1)}{2}}\prod_{1\leq i<j\leq k}(b_ic_j-c_ib_j)^2.   
\end{equation}
Combing (\ref{product}) and (\ref{disc'}) yields 
\begin{equation}\label{delta and D}
     \Delta_\sigma^k(u)=(-s^2)^{\frac{k(k-1)}{2}}D(F).
\end{equation}

Let us assume $c_j\neq 0$ for any $j$ for the moment. If we denote $b_j/c_j$ by $r_j$ and 
denote $F(x,1)$ by $f(x)$, then $f(x)=\prod(b_j+c_jx)=\prod (r_j+x)\prod c_j$ and the discriminant of $f$ is
\begin{equation}\label{disc}
\begin{aligned}
 D(f(x))=&(-1)^{\frac{k(k-1)}{2}}(\prod_{j=1}^k c_j)^{2(k-1)}\prod_{1\leq i<j\leq k}(r_i-r_j)^2\\
 =&(-1)^{\frac{k(k-1)}{2}}\prod_{1\leq i<j\leq k} (c_ic_j)^{2}\prod_{1\leq i<j\leq k}(r_i-r_j)^2\\
 =& (-1)^{\frac{k(k-1)}{2}}\prod_{1\leq i<j\leq k} (b_ic_j-c_ib_j)^2=D(F(x,y)).   
\end{aligned}
\end{equation}
By \cite[Page 405, (1.30)]{GKZ}, because $f(x)=\sum_{j=0}^k a_j x^j$, $D(f(x))$ is a homogeneous polynomial of degree $2(k-1)$ in $a_0,\ldots, a_k$. So, $D(F(x,y))$ is also a homogeneous polynomial of degree $2(k-1)$ in $a_0,\ldots, a_k$, and the statement holds without assuming $c_j\neq 0$ for any $j$ because both sides are polynomials in $b_i$ and $c_i$. Therefore, from (\ref{delta and D}), we obtain that $\Delta^k_\sigma$ is a homogeneous polynomial of degree $2(k-1)$ on $S^kV$. 

Assuming $\sigma \neq 0$, we see that $\sigma(e_0,e_1)=s$ must be nonzero. If we define $u=\bigodot_{j=1}^k (b_je_0+e_1)$ with distinct $b_j$, then $D(F)\neq 0$ by (\ref{disc'}), and so $\Delta_\sigma^k(u)\neq 0$ by (\ref{delta and D}). We obtain $\Delta_\sigma^k\neq 0$.

For the second part of the lemma, it suffices to show that $x\mapsto \Delta^k_{\sigma(x)}(u(x))$ is holomorphic for any local holomorphic section $u$ of $S^kT^{1,0}X$. Consider a local holomorphic frame $\{e_0,e_1\}$ of $T^{1,0}X$. Because $\sigma$ is holomorphic, $\sigma(e_0,e_1):=s$ is holomorphic. For a local holomorphic section $u$ of $S^kT^{1,0}X$, if we write $u=\sum_{j=0}^k a_j e_0^{k-j}e_1^j$, then all the $a_j$ are holomorphic.  From (\ref{delta and D}), we see $$x\mapsto \Delta^k_{\sigma(x)}(u(x))=(-s^2)^{\frac{k(k-1)}{2}}D(F)$$ is holomorphic because $D(F)$ is a homogeneous polynomial in $a_0,\ldots, a_k$ with constant coefficients (see \cite[Page 405, (1.30)]{GKZ}).
\end{proof}

\begin{proof}[Proof of Corollary \ref{cor}]
    For a K3 surface $X$, since the canonical bundle $K_X$ is trivial, there is a global holomorphic frame $\sigma$ of $K_X$. According to Lemma \ref{lem long}, for $k\geq 2$, the section $x\mapsto \Delta^k_{\sigma(x)}$ is nonzero vector in $H^0(X, S^{2(k-1)}(S^kT^{1,0}X)^*)$. Therefore by Lemma \ref{line bundle},  $S^kT^{1,0}X$ is not RC-positive for $k\geq 2$.
\end{proof}

\bibliographystyle{amsalpha}
\bibliography{Dominion}

\textsc{June E Huh Center for Mathematical Challenges, Korea Institute for Advanced Study, 85 Hoegi-ro, Dongdaemun-gu, Seoul, Republic of Korea.} \texttt{ \textbf{ bensonchou@kias.re.kr}}

\textsc{National Dong Hwa University, No.1, Section 2, Daxue Road, Hualien, Taiwan.} \texttt{\textbf{wuuuruuu@gmail.com}}

\end{document}